\newtheorem{theorem}{Theorem}[section]
\newtheorem{lemma}[theorem]{Lemma}
\theoremstyle{definition}
\newtheorem{claim}{Claim}
\newtheorem{case}{Case}
\newtheorem{subcase}{Case}[case]
\newcommand{\ar}{{\rm ar}}
\newcommand{\ex}{{\rm ex}}
\begin{document}
\title {The anti-Ramsey numbers of cliques in complete multi-partite graphs}

\author{Yuyu An$^{a,b}$,~Ervin Gy\H{o}ri$^{c,}$\thanks{Corresponding author. E-mail: anyuyu@mail.nwpu.edu.cn (Y.A.), gyori.ervin@renyi.hu (E.G.), binlongli@nwpu.edu.cn (B.L.)}~, Binlong Li$^{a,b}$\\[2mm]
{\small $^a$School of Mathematics and Statistics,}\\[-0.8ex]
{\small Northwestern Polytechnical University, Xi'an, 710072, China}\\
{\small $^b$Xi'an-Budapest Joint Research Center for Combinatorics,}\\[-0.8ex]
{\small Northwestern Polytechnical University, Xi'an, 710072, China}\\
{\small $^c$Alfr\'{e}d R\'{e}nyi Institute of Mathematics, }\\[-0.8ex]
{\small Hungarian Academy of Sciences, 1053 Budapest, Hungary}
}

\date{}

\maketitle

\begin{abstract}
  A subgraph of an edge-colored graph is rainbow if all of its edges have different colors. Let $G$ and $H$ be two graphs. The anti-Ramsey number $\ar(G, H)$ is the maximum number of colors of an edge-coloring of $G$ that does not contain a rainbow copy of $H$. In this paper, we study the anti-Ramsey numbers of $K_k$ in complete multi-partite graphs. We determine the values of the anti-Ramsey numbers of $K_k$ in complete $k$-partite graphs and in balanced complete $r$-partite graphs for $r\geq k$.
\smallskip

\emph{Keywords:} anti-Ramsey number; multi-partite graph; extremal coloring
\end{abstract}

\section{Introduction}

Let $G$ be a graph, we use $e(G)$ to denote the number of edges of $G$. An \emph{edge-coloring} of $G$ is a mapping $c: E(G)\rightarrow\mathbb{N}$, where $\mathbb{N}$ is the set of natural numbers. We call $G$ an \emph{edge-colored} graph if it is assigned such an edge-coloring $c$. A subgraph $H$ of $G$ is called \emph{rainbow} if all of its edges have different colors.

For given graphs $G$ and $H$, the \emph{Tur\'an number} $\ex(G,H)$ is the maximum number of edges in a subgraph of $G$ without copy of $H$; and the subgraphs achieving the maximum edge number are \emph{extremal} for $\ex(G,H)$. The \emph{anti-Ramsey number} $\ar(G, H)$ is the maximum number of colors in an edge-coloring of $G$ without rainbow copy of $H$; and the edge-colorings achieving the maximum color number are \emph{extremal} for $\ar(G, H)$. Clearly $\ar(G,H)\leq\ex(G,H)$.

The study of anti-Ramsey theory was initiated by Erd\H{o}s, Simonovits and S\'{o}s \cite{ErSiSo} and considered in the classical case when $G=K_n$. Since then plentiful results were established for a variety of graphs $H$, including among cliques, cycles, paths, etc. We refer the read to \cite{FuMaOz} for a survey.

Erd\H{o}s, Simonovits and S\'{o}s \cite{ErSiSo} calculated  the anti-Ramsey numbers $\ar(K_n, K_3)$. They also determined $\ar(K_n, K_k)$ for $n$ large enough and $k\geq 4$ in the same paper. Schiermeyer \cite{Sc} showed that the result of $\ar(K_n, K_k)$ given by Erd\H{o}s et al. holds for all $n\geq k\geq 4$.

\begin{theorem}[Erd\H{o}s et al. \cite{ErSiSo}]\label{ThErSiSo}
For all $n\geq 3$, $\ar(K_n,K_3)=n-1$.
\end{theorem}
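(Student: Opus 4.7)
The plan is to establish the matching lower bound $\ar(K_n,K_3)\ge n-1$ by an explicit construction and then the upper bound $\ar(K_n,K_3)\le n-1$ by a swap argument on a minimum-girth rainbow ``representative'' subgraph.

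For the lower bound I would fix a vertex $v_0$ and label the remaining vertices $v_1,\ldots,v_{n-1}$. Color each edge $v_0v_i$ with color $i$, and color each remaining edge $v_iv_j$, with $1\le i<j\le n-1$, by color $i$. This uses exactly $n-1$ colors. Every triangle either has the form $v_0v_iv_j$ with $i<j$, whose edges receive colors $i,j,i$, or the form $v_iv_jv_k$ with $i<j<k$, whose edges receive colors $i,i,j$; in either case the triangle is not rainbow.

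For the upper bound, suppose toward contradiction that an edge-coloring of $K_n$ uses at least $n$ colors and contains no rainbow triangle. Choosing one representative edge per color yields a rainbow subgraph $R$ with at least $n$ edges on $n$ vertices, hence $R$ contains a cycle. Among all such rainbow representative subgraphs, fix one of minimum girth $g$. If $g=3$ then $R$ already contains a rainbow triangle, a contradiction. Otherwise let $v_1v_2\cdots v_gv_1$ be a shortest cycle of $R$ with $g\ge 4$. The edges $v_1v_2$ and $v_2v_3$ lie in $R$ and carry distinct colors; since the triangle $v_1v_2v_3$ is not rainbow, the chord $v_1v_3$ must share its color with one of $v_1v_2,v_2v_3$. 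Replacing that edge in $R$ by $v_1v_3$ produces a rainbow representative subgraph of the same size containing the shorter cycle $v_1v_3v_4\cdots v_gv_1$, contradicting the minimality of the girth of $R$.

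The main obstacle is the upper bound, and within it the crucial point is verifying the swap: one needs both that the new edge set remains rainbow, which holds because the replaced edge and its substitute share a color, and that a strictly shorter cycle genuinely appears, which is guaranteed because $g\ge 4$ forces $v_1v_3$ to be a chord of $C$ rather than one of its edges. Iterating, or equivalently using the minimum-girth choice, the girth must in fact equal $3$, producing the desired rainbow triangle.
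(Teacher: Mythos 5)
Your proof is correct. Note that the paper itself offers no proof of this statement: it is quoted as a known result of Erd\H{o}s, Simonovits and S\'os \cite{ErSiSo}, so there is no internal argument to compare against. Your two halves are both sound and are essentially the classical argument: the lower-bound coloring (color $v_iv_j$ by $\min\{i,j\}$ after distinguishing a hub $v_0$, or equivalently any coloring whose color classes are the stars of a vertex ordering) kills every triangle because its two lowest-indexed edges share a color, and the upper bound correctly exploits that a rainbow representative subgraph with $n$ edges on $n$ vertices must contain a cycle. The one step worth making explicit in the swap is that for girth $g\geq 4$ the chord $v_1v_3$ cannot already lie in $R$ (it would create a triangle in $R$, contradicting $g\geq 4$), so the exchange genuinely replaces an edge of $R$ by a new edge of the same color, preserving both the rainbow property and the edge count while shortening the cycle --- you implicitly use this and it does hold, so the argument closes.
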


\begin{theorem}[Schiermeyer \cite{Sc}]\label{ThSc}
For all $n\geq k\geq 4$, $\ar(K_n,K_k)=\ex(K_n,K_{k-1})+1$.
\end{theorem}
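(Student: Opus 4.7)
\medskip
\noindent\textbf{Proof proposal.}
The plan is to handle the two inequalities separately, with the lower bound coming from an explicit construction and the upper bound from a representing-subgraph argument.

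For the lower bound $\ar(K_n,K_k)\ge \ex(K_n,K_{k-1})+1$, I would take a $K_{k-1}$-free extremal subgraph $T\subseteq K_n$ (concretely, the Tur\'an graph $T_{k-2}(n)$), assign each of its $\ex(K_n,K_{k-1})$ edges a distinct color, and color every remaining edge of $K_n$ with one additional new color. To verify that no rainbow $K_k$ appears, I would observe that if some $K_k$ had at most one edge outside $T$, then deleting that one edge would leave a copy of $K_{k-1}$ inside $T$ (since $K_k$ minus any single edge still contains $K_{k-1}$), contradicting the choice of $T$. Hence every $K_k$ uses the extra color on at least two edges and so is not rainbow.

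For the upper bound, I would argue by contradiction: fix an edge-coloring $c$ of $K_n$ using $N=\ex(K_n,K_{k-1})+2$ colors and with no rainbow $K_k$, and build a \emph{representing subgraph} $R\subseteq K_n$ by selecting exactly one edge of each color. Then $e(R)=N>\ex(K_n,K_{k-1})$, so $R$ contains a copy $S$ of $K_{k-1}$, automatically rainbow since the edges of $R$ have pairwise distinct colors. The goal is to extend $S$ to a rainbow $K_k$: I would look for a vertex $v\notin V(S)$ such that the $k-1$ edges from $v$ to $V(S)$ all receive distinct colors, none of which already appears on $S$.

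The delicate step is to guarantee the existence of such a $v$. My plan is to choose the representing subgraph $R$ extremally --- for instance, among all representing subgraphs, pick one that maximizes the number of copies of $K_{k-1}$, or one whose edges in $S$ lie on as many ``extendable'' vertices as possible --- and then suppose that every $v\notin V(S)$ fails the extension condition. For each such $v$, the $k-1$ edges $vS$ either repeat a color among themselves or repeat one of the $\binom{k-1}{2}$ colors used in $S$. I would try to exchange edges between $R$ and the rest of $K_n$, replacing an edge of $R$ with another edge of the same color, to produce a new representing subgraph $R'$ in which the current obstruction vanishes, contradicting extremality of $R$. The main obstacle I anticipate is in the bookkeeping of this exchange argument: showing that the re-routing never reintroduces the same type of conflict and that the counting of ``bad'' vertices $v$ is incompatible with $n\ge k$, which is presumably where Schiermeyer had to sharpen the original large-$n$ argument of Erd\H{o}s--Simonovits--S\'os to cover the full range $n\ge k\ge 4$.
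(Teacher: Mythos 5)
This statement is quoted by the paper from Schiermeyer's article \cite{Sc}; the paper itself offers no proof of it, so there is nothing internal to compare your argument against. Judged on its own, your lower bound is complete and correct: coloring a $K_{k-1}$-free extremal graph $T$ rainbow and the remaining edges with one common color works, and your justification --- that a $K_k$ with at most one edge outside $T$ would force a $K_{k-1}$ inside $T$, since $K_k$ minus a single edge still contains $K_{k-1}$ --- is exactly right and gives $\ar(K_n,K_k)\geq \ex(K_n,K_{k-1})+1$.

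The upper bound, however, has a genuine gap precisely at the step you yourself flag as delicate. Choosing a representing subgraph $R$ with $\ex(K_n,K_{k-1})+2$ edges and extracting a rainbow copy $S$ of $K_{k-1}$ is the easy half; the entire content of the theorem lies in showing that some rainbow $K_{k-1}$ (in some representing subgraph) extends to a rainbow $K_k$, and your text only describes an intention to run an exchange argument, not the argument itself. The difficulty is not mere bookkeeping: for a \emph{fixed} $R$ and a \emph{fixed} $S$ the extension can genuinely fail for every vertex $v\notin V(S)$ (the near-extremal colorings built from the Tur\'an graph show that local obstructions of the two types you list really do occur simultaneously at all outside vertices), so one cannot avoid a global argument that either re-selects $S$, re-selects $R$, or counts colors across the whole graph. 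This is where Erd\H{o}s--Simonovits--S\'os needed $n$ large and where Schiermeyer's contribution lies (his proof proceeds by induction on $n$ with a careful analysis of color degrees, not by a single exchange step). As written, your proposal establishes $\ar(K_n,K_k)\geq\ex(K_n,K_{k-1})+1$ and reduces the converse to an unproved extension lemma, so it does not yet constitute a proof of the theorem.
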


Fang et al. studied the anti-Ramsey numbers of $K_3$ in complete multi-partite graphs.

\begin{theorem}[Fang et al. \cite{FaGyLiXi}]\label{ThFaGyLiXi}
For $r\geq 3$ and $n_1\geq n_2\geq\cdots\geq n_r\geq 1$, we have
$$\ar(K_{n_1,n_2,\ldots,n_r},K_3)=\left\{\begin{array}{ll}
  n_1n_2+n_3n_4+\cdots+n_{r-2}n_{r-1}+n_r+\frac{r-1}{2}-1, & r \mbox{ is odd};\\
  n_1n_2+n_3n_4+\cdots+n_{r-1}n_r+\frac{r}{2}-1, & r \mbox{ is even}.
\end{array}\right.$$
\end{theorem}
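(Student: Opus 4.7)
The plan is to prove matching upper and lower bounds.

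For the \textbf{lower bound}, I would exhibit an explicit rainbow-triangle-free edge-coloring achieving the claimed count. Pair the parts consecutively as $(V_1, V_2), (V_3, V_4), \ldots$, leaving $V_r$ unpaired when $r$ is odd, and assign pairwise distinct colors to the $n_{2i-1} n_{2i}$ edges inside each pair. Contract each pair to a single super-vertex, keeping each vertex of $V_r$ as its own super-vertex in the odd case; the resulting auxiliary graph is $K_{r/2}$ when $r$ is even, and a complete multipartite graph with $(r-1)/2$ parts of size $1$ and one part of size $n_r$ when $r$ is odd. Apply a rainbow-triangle-free coloring to the auxiliary graph: Theorem~\ref{ThErSiSo} directly yields $r/2 - 1$ colors in the even case, and a short ad hoc construction (color the pair/pair edges via Theorem~\ref{ThErSiSo} and assign each $v \in V_r$ a private color $e_v$ used on all edges from $v$ to every pair super-vertex) yields $n_r + (r-1)/2 - 1$ colors in the odd case. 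Lift this coloring by giving every original edge the color of its auxiliary image; a short case analysis over triangle types (inside a pair, across two pairs, across three super-vertices, or across a pair-pair-$V_r$ triple) confirms no rainbow triangle is created.

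For the \textbf{upper bound}, I would show that any coloring with one more color than claimed contains a rainbow triangle, arguing by induction on $r$. In the base case $r = 3$, choose a rainbow representative subgraph $F$ (one edge per color); if $F$ contains a triangle we are done, so assume $F$ is triangle-free. Exploit both the structure of triangle-free subgraphs of $K_{n_1, n_2, n_3}$ and the constraint that every non-representative colored edge must not close a rainbow triangle with two $F$-edges of different colors to rule out $|E(F)| \ge n_1 n_2 + n_3 + 1$. For the inductive step, peel off a well-chosen part (a pair of parts when preserving parity) to reduce to a smaller complete multipartite graph, then combine the induction hypothesis with the color accounting on the removed part to extract a rainbow triangle.

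\textbf{The main obstacle} is the upper bound, because the naive Tur\'an-type bound $\ar(G, K_3) \le \ex(G, K_3)$ is far from tight here: a maximum triangle-free subgraph of $K_{n_1, n_2, n_3}$ has $n_1(n_2 + n_3)$ edges, exceeding the anti-Ramsey number $n_1 n_2 + n_3$ by $(n_1 - 1)n_3$. One must therefore use non-representative colored edges to force the extra rainbow triangle, and the induction must carefully handle the parity of $r$ (since the even and odd formulas differ by an $n_r$ term), which I expect to be the primary source of bookkeeping complexity.
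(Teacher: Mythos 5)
This statement is not proved in the paper at all: it is quoted verbatim from Fang, Győri, Li and Xiao \cite{FaGyLiXi}, so there is no in-paper argument to compare yours against. Judged on its own, your \textbf{lower bound} is complete and correct: pairing consecutive parts, rainbow-coloring each $V_{2i-1}\times V_{2i}$, and lifting an extremal rainbow-triangle-free coloring of the auxiliary graph (with the private colors $e_v$ for $v\in V_r$ in the odd case) does give the claimed number of colors, the case analysis over triangle types is exhaustive, and the consecutive pairing does maximize $\sum_i n_{2i-1}n_{2i}$ by the rearrangement inequality. One can check it agrees with Theorem~\ref{ThkPartite} at $k=3$ and with Theorem~\ref{ThErSiSo} at $n_1=\cdots=n_r=1$.

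The \textbf{upper bound}, however, is a plan rather than a proof, and the part you defer is exactly where all the difficulty lives. In the base case $r=3$ you correctly observe that a rainbow representative subgraph $F$ with $n_1n_2+n_3+1$ edges need not contain a triangle, since $\ex(K_{n_1,n_2,n_3},K_3)\ge n_1(n_2+n_3)$ is much larger; but the sentence ``exploit both the structure of triangle-free subgraphs \ldots and the constraint that every non-representative colored edge must not close a rainbow triangle'' is precisely the theorem restated, not an argument. The inductive step is similarly unsubstantiated: after peeling off $V_{r-1}\cup V_r$ you must show that at most $n_{r-1}n_r+1$ colors are saturated by the removed vertices (in the sense of Section~2), or else produce a rainbow triangle, and nothing in the proposal indicates how to force this; the known proofs of such statements rely on structural input (e.g.\ Gallai-type partitions of rainbow-triangle-free colorings, or the symmetrization/blow-up machinery of Section~2 of this paper, via which the $r=k=3$ case follows from Theorem~\ref{ThkPartite} and the general $r$ case from Theorem~\ref{ThUnbalanced} plus an optimization over colorings of $K_r$). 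As written, the upper bound has a genuine gap.
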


In this paper, we consider the anti-Ramsey number of $K_k$ in complete multi-partite graphs. We first give the anti-Ramsey numbers of $K_k$ in complete $k$-partite graphs and in balanced complete $r$-partite graphs with $r\geq k$.

\begin{theorem}\label{ThkPartite}
For $k\geq 3$ and $n_1\geq n_2\geq\cdots\geq n_k\geq 1$, we have
$$\ar(K_{n_1,n_2,\ldots,n_k},K_k)=\sum_{1\leq i<j\leq k}n_in_j-n_k(n_{k-1}+n_{k-2}-1).$$
\end{theorem}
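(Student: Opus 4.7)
The plan is to prove matching lower and upper bounds, the former by an explicit construction and the latter by a representative-subgraph-plus-extension argument.

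\textbf{Lower bound.} I would construct an edge-coloring of $K_{n_1,\ldots,n_k}$ as follows: for each $v \in V_k$, assign a single shared color $c_v$ to every edge from $v$ to $V_{k-1}\cup V_{k-2}$; give every other edge a unique private color. The color count is $n_k + \sum_{(i,j)\notin\{(k-2,k),(k-1,k)\}}n_in_j = \sum_{1\le i<j\le k}n_in_j - n_k(n_{k-1}+n_{k-2}-1)$. In any $K_k$ with vertices $v_i\in V_i$, the edges $v_kv_{k-1}$ and $v_kv_{k-2}$ share color $c_{v_k}$, so no $K_k$ is rainbow.

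\textbf{Upper bound.} Suppose $c$ is an edge-coloring using $N+1$ colors (where $N$ denotes the claimed value) with no rainbow $K_k$. Pick one edge of each color class to form a representative subgraph $R$; then $|E(R)| = N+1$ and $R$ is $K_k$-free (a $K_k$ in $R$ would be rainbow). A Tur\'an-type computation gives $\ex(K_{n_1,\ldots,n_k},K_k) = \sum n_in_j - n_{k-1}n_k$ (the extremal example deletes all $V_{k-1}$-$V_k$ edges, and the matching lower bound follows from a uniform fractional $K_k$-packing), yielding $N+1 \le \sum n_in_j - n_{k-1}n_k$. This already resolves the case $n_{k-2}=1$, which forces $n_{k-1}=n_k=1$. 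Otherwise a gap of $n_k(n_{k-2}-1)+1$ remains. To close it, the plan is a color-extension argument: locate a rainbow $K_{k-1}$ on parts $V_1,\ldots,V_{k-1}$ (with $V_k$ omitted), and observe that for each $v\in V_k$, failure of this $K_{k-1}$ to extend to a rainbow $K_k$ forces a color collision among the $k-1$ edges from $v$ to that $K_{k-1}$. A careful accounting of these forced collisions, summed over $v\in V_k$ and possibly over multiple rainbow $K_{k-1}$'s, should produce the missing $n_k(n_{k-2}-1)+1$ color savings.

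\textbf{Main obstacle.} The principal difficulty lies in making the collision-counting sharp. The formula's asymmetry singles out $\{V_{k-2},V_{k-1},V_k\}$ (the three smallest parts), so the proof must compare the construction's collision-allocation to alternative patterns (e.g.\ centering them on $V_{k-3}$) and leverage the ordering $n_1\ge\cdots\ge n_k$ to show the chosen triple is optimal. The $-1$ in $n_{k-1}+n_{k-2}-1$ should emerge naturally as one ``free'' representative color per vertex of $V_k$ already present in $R$. Organizing the bookkeeping so that no collision is double-counted, and so the analysis is uniform over all $v\in V_k$, will be the main technical hurdle.
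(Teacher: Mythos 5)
Your lower bound is correct and is exactly the paper's ``normal coloring'' (the blow-up of a $K_k$ colored rainbow except for $c(v_{k-2}v_k)=c(v_{k-1}v_k)$), so that half is fine. The upper bound, however, has a genuine gap, and it sits precisely where you locate the ``main obstacle.'' The representative-subgraph step gives $c(G)\leq\ex(K_{n_1,\ldots,n_k},K_k)=\sum n_in_j-n_{k-1}n_k$ (your fractional-packing justification of this Tur\'an number does work: weight each transversal $K_k$ by $1/\prod_{\ell\leq k-2}n_\ell$), but this leaves a deficit of $\ex-N=n_k(n_{k-2}-1)$, and the proposed collision-counting cannot close it as described. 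First, the existence of a rainbow $K_{k-1}$ on $V_1,\ldots,V_{k-1}$ is never established (it would itself require an inductive anti-Ramsey bound for $K_{k-1}$ in $K_{n_1,\ldots,n_{k-1}}$, which you do not set up). Second, even granting one, the failure of $v\in V_k$ to extend it forces only a single repeated color in the resulting $K_k$ --- and that repetition need not be ``among the $k-1$ edges from $v$''; one of the two edges may lie inside the $K_{k-1}$. Third, and most decisively, one collision per vertex of $V_k$ yields at most $n_k$ savings, whereas $n_k(n_{k-2}-1)$ are required; when $n_{k-2}\geq 3$ this is short by a factor of $n_{k-2}-1$, so you would need, for each $v\in V_k$, a whole family of collisions spanning two classes of total size $n_{k-2}+n_{k-1}$, with a guarantee that none of these savings were already accounted for in $|E(R)|$. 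No mechanism for producing or disjointly counting these is given, and this is the actual content of the theorem.

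For comparison, the paper avoids this bookkeeping entirely by a structural reduction: a symmetrization argument (Lemmas \ref{LeSymmetrization}--\ref{LeTotaly} and Theorem \ref{ThUnbalanced}) shows that some extremal coloring of $K_{n_1,\ldots,n_k}$ is the blow-up $\mathcal{B}(K^c,f)$ of a non-rainbow coloring $c$ of $K_k$ itself. The upper bound then becomes a two-case count on $K_k$: a color class not saturated by any vertex costs at least $n_{i_1}n_{j_1}+n_{i_2}n_{j_2}-1$ colors in the blow-up, and one saturated by a single vertex $v_{i_1}$ costs at least $n_{i_1}(n_{j_1}+n_{j_2}-1)$; both are at least $n_k(n_{k-1}+n_{k-2}-1)$ by the ordering of the $n_i$. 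If you want to salvage your approach, the honest route is either to carry out an Erd\H{o}s--Simonovits--S\'os-style induction with a genuinely sharp collision count (hard), or to adopt the symmetrization reduction, which is where the real work of this theorem lives.
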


Let $K_r^t$ be the complete $r$-partite graph with all partite sets of equal size $t$.

\begin{theorem}\label{ThBalanced}
For $r\geq k\geq 4$, we have $$\ar(K_r^t,K_k)=\left\{\begin{array}{ll}
  t^2({k\choose 2}-2)+t, & r=k\\
  t^2\ex(K_r,K_{k-1})+1, & r>k.
\end{array}\right.$$
\end{theorem}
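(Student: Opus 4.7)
\bigskip

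\noindent\textbf{Proof plan for Theorem~\ref{ThBalanced}.}

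\medskip

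\emph{Lower bounds via explicit colorings.} I will first produce colorings realizing the claimed values. For $r>k$, I take an extremal $K_{k-1}$-free graph $T$ on the vertex set $[r]$ with $\ex(K_r,K_{k-1})$ edges, and in $K_r^t$ I give each edge lying between parts $V_i,V_j$ with $ij\in E(T)$ its own fresh color, while every edge between parts $V_i,V_j$ with $ij\notin E(T)$ receives one common additional color. A rainbow $K_k$ would use $\binom{k}{2}$ pairs of parts; at most one such pair can correspond to a non-edge of $T$, so the remaining $\binom{k}{2}-1\geq \binom{k-1}{2}$ pairs form a $K_{k-1}$ in $T$, a contradiction. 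For $r=k$, I pick two pairs of parts sharing a common part, say $\{V_1,V_2\}$ and $\{V_1,V_3\}$, enumerate $V_1=\{v_{1,1},\dots,v_{1,t}\}$, give every edge from $v_{1,a}$ to $V_2\cup V_3$ the single color $c_a$ (so $t$ colors are used on these $2t^2$ edges), and give every edge between any other pair $\{V_i,V_j\}$ a distinct fresh color. Any $K_k$ uses one vertex $v_{1,a}\in V_1$ and its two edges to $V_2,V_3$ share color $c_a$, so no rainbow $K_k$ exists.

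\medskip

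\emph{Upper bound for $r>k$.} A short Tur\'an-type computation shows that $\ex(K_r^t,K_{k-1})=t^2\ex(K_r,K_{k-1})$ (the Tur\'an graph on $rt$ vertices that respects the partition classes is exactly a balanced blow-up of the extremal $K_{k-1}$-free graph on $[r]$). Given a coloring with at least $t^2\ex(K_r,K_{k-1})+2$ colors, I pick one representative edge per color to form a subgraph $G_0$ of size $>\ex(K_r^t,K_{k-1})$, so $G_0$ contains a rainbow $K_{k-1}$, call it $F$ with vertices in parts $V_{i_1},\dots,V_{i_{k-1}}$. I then try to extend $F$ to a rainbow $K_k$ using a vertex $u$ in one of the $r-(k-1)\geq 2$ unused parts; each choice of $u$ yields $k-1$ candidate edges, and I bound the number of ``bad'' extensions (those producing a color repetition either among the new edges or with a color of $F$) against the pool of $\geq(r-k+1)t$ candidate vertices and the additional representative edges of $G_0$ not in $F$.

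\medskip

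\emph{Upper bound for $r=k$.} This is the most delicate part. Suppose a coloring uses $t^2\bigl(\binom{k}{2}-2\bigr)+t+1$ colors, and again form a representative subgraph $G_0$ with this many edges. Let $e_{ij}=|E(G_0)\cap E(V_i,V_j)|$ so that $\sum_{i<j}(t^2-e_{ij})=2t^2-t-1$. A pair $\{V_i,V_j\}$ is called deficient when $e_{ij}<t^2$. I plan to proceed by case analysis on the configuration of deficient pairs. If no pair is deficient then $G_0=K_k^t$ and any transversal gives a rainbow $K_k$; if the deficiency is spread over three or more pairs, or over two vertex-disjoint pairs, a direct greedy transversal construction — picking one vertex from each $V_i$ in a carefully chosen order so that each new edge lands in the representative set — will succeed. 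The remaining, tight, case is when the deficiency is concentrated on two pairs sharing a common part $V_1$: here the coloring essentially coincides with the extremal construction above, and the one surplus color is then used to swap a vertex in $V_1$ and produce a rainbow $K_k$.

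\medskip

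\emph{Main obstacle.} The hard step is the upper bound when $r=k$. The naive inclusion–exclusion over bad transversals is not tight enough (for $t\geq 2$ it does not even give a positive lower bound on the number of rainbow transversals in $G_0$), so the argument has to exploit the structure of repeated color classes across the bipartite pieces $E(V_i,V_j)$ and the fact that any $K_k$ must hit \emph{all} $k$ parts simultaneously. Controlling how the extra color interacts with the two deficient pairs — in particular ruling out the degenerate configurations where the extremal coloring is not quite matched — is where the bulk of the proof will lie.
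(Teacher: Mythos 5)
Your lower-bound constructions are exactly the paper's (the ``normal coloring'' for $r=k$ and the Tur\'an coloring built from a rainbow $T_{r,k-2}$ for $r>k$), and they are correct. The upper bounds, however, are where the content of the theorem lies, and there the proposal has a genuine gap: neither counting argument is carried out, and as sketched neither is likely to close. For $r>k$ you propose to find a rainbow $K_{k-1}$ in a representative subgraph $G_0$ and extend it by bounding ``bad'' extensions against the pool of $(r-k+1)t$ candidate vertices. This is precisely the step that makes even the classical case $\ar(K_n,K_k)$ hard: the naive extension count fails when $r$ is close to $k$ (already for $r=k+1$ you have only one spare part), which is why Schiermeyer needed a separate argument to push Erd\H{o}s--Simonovits--S\'os down to all $n\geq k$. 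You would in effect be reproving a blow-up version of Schiermeyer's theorem from scratch. (Also, the identity $\ex(K_r^t,K_{k-1})=t^2\ex(K_r,K_{k-1})$ that you invoke as a ``short computation'' needs a restricted symmetrization argument, since Zykov symmetrization across different parts can leave the host graph $K_r^t$.) For $r=k$ you explicitly concede that the inclusion--exclusion over bad transversals is not tight enough and that the bulk of the proof remains; so that case is open in your write-up.

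The idea you are missing is the paper's structural reduction. Using the symmetrization machinery (Lemmas \ref{LeSymmetrization}--\ref{LeTotaly}), the paper first proves Theorem \ref{ThUnbalanced}: some extremal coloring of $K_{n_1,\dots,n_r}$ is a blow-up $\mathcal{B}(K_r^c,f)$ of a coloring $c$ of $K_r$ with no rainbow $K_k$. This converts the problem into pure bookkeeping on $K_r$: writing $s_i=|S^i(K_r)|$ for the number of colors saturated by exactly $i$ vertices, one has $c(G)=s_0+ts_1+t^2s_2$ and, by Schiermeyer's theorem applied to $K_r$, $s_0+s_1+s_2\leq\ex(K_r,K_{k-1})+1$; a three-line case analysis on $s_0+s_1$ then yields $c(G)\leq t^2\ex(K_r,K_{k-1})+1$ for $r>k$, and the $r=k$ case follows from Theorem \ref{ThkPartite} by the same reduction. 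So all the difficulty you correctly identify in your ``main obstacle'' paragraph is absorbed by citing $\ar(K_r,K_k)=\ex(K_r,K_{k-1})+1$ as a black box after the blow-up reduction. Without that reduction (or an equivalent structural statement about extremal colorings), your plan does not yield a proof.
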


For the anti-Ramsey numbers of $K_k$ in unbalanced complete $r$-partite graphs, we can only give some structure properties of extremal colorings, see Theorem \ref{ThUnbalanced} in next section.

This paper is organized as follows: In Section 2, we will introduce basic terminology and significant lemmas. In Section 3, we give the proof of Theorems \ref{ThkPartite} and \ref{ThBalanced}. In Section 4, we summarize the extremal coloring for the anti-Ramsey numbers of $K_k$ in complete multi-partite graphs.

\section{Some preliminaries}

\subsection{Symmetrization of graphs and colorings}

Throughout the paper, a coloring always infer to an edge-coloring. Let $G$ be a colored graph with coloring $c$ and $v\in V(G)$. We set $C(G)=\{c(e): e\in E(G)\}$ and $c(G)=|C(G)|$. We say a color $a\in C(G)$ \emph{appears} at $v$ if there is at least one edge of color $a$ incident to $v$, and we say $a$ is \emph{saturated} by $v$ if every edge of color $a$ is incident to $v$ (and $a$ appears at $v$). Note that if a color $a$ is saturated by a vertex $v$, then $G^a$, the subgraph of $G$ induced by all the edges of color $a$, is a star centered at $v$. We use $C_G(v)$ and $S_G(v)$ to denote the set of colors that appears at $v$ and that is saturated by $v$, respectively. We define the color degree and the saturated color degree of $v$ as $d_G^c(v)=|C_G(v)|$ and $d_G^s(v)=|S_G(v)|$, we write $d^c(v)$ and $d^s(v)$ instead of $d_G^c(v)$ and $d_G^s(v)$ for short. Note that every color is saturated by 0, 1 or 2 vertices, and a color $a$ is saturated by 2 vertices if and only if $G$ has exactly one edge of color $a$. In this case we call $a$ an \emph{exclusive} color. We use $S^i(G)$, $i=0,1,2$, to denote the set of colors that saturated by $i$ vertices of $G$.

For a set $U\subset V(G)$, we say a color $a$ is saturated by $U$ if every edge of color $a$ is incident to some vertex in $U$.

To stress the coloring, we sometime denote by $G^c$ the colored graph $G$ with coloring $c$.

Let $G$ be a (non-colored) graph and $u,v\in V(G)$ be nonadjacent. We say $u$ and $v$ are symmetric in $G$ if $N_G(u)=N_G(v)$. The symmetrization of $G$ at $v$ to $u$, is the graph $G'$ obtained from $G$ by removing all edges incident to $v$ and then adding all edges in $\{vx: ux\in E(G)\}$. Note that if $G'$ is a symmetrization of $G$ at $v$ to $u$, then $e(G')=e(G)-d(v)+d(u)$.

Let $G^c$ be a colored graph and $u,v\in V(G)$ be nonadjacent. We say $u$ and $v$ are symmetric in $G^c$ if they are symmetric in $G$ (i.e., $N_G(u)=N_G(v)$) and there is a bijection $\sigma: S_G(u)\rightarrow S_G(v)$, such that for every vertex $x\in N_G(v)$, $c(vx)=c(ux)$ if $c(ux)\notin S_G(u)$ and $c(vx)=\sigma c(ux)$ if $c(ux)\in S_G(u)$. Suppose that $u,v$ are symmetric in $G$. The symmetrization of the coloring $c$ at $v$ to $u$, is the coloring $c'$ of $G$ obtained from $c$ by the following operation: first for each color $a\in S_G(u)$, define an extra color $\sigma a$ ($\notin C(G)$), and then recolor the edges incident to $v$ such that $c'(vx)=c(ux)$ if $c(ux)\notin S_G(u)$ and $c'(vx)=\sigma c(ux)$ if $c(ux)\in S_G(u)$. Notice that if $c'$ is the symmetrization of $c$ at $v$ to $u$, then $u,v$ are symmetric in $G^{c'}$.

\begin{lemma}\label{LeSymmetrization}
Let $G^c$ be a colored graph, $u,v$ be symmetric in $G$, and let $c'$ be the symmetrization of $c$ at $v$ to $u$. Then\\
(1) $c'(G)=c(G)-d^s(v)+d^s(u)$;\\
(2) if $G^{c'}$ contains a rainbow $K_k$, then so does $G^c$;\\
(3) if two vertices $x,y\in V(G)\backslash\{v\}$ are symmetric in $c$, then they are symmetric in $c'$.
\end{lemma}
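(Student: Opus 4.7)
The three parts become increasingly delicate, and I would treat them in turn. For part (1), the plan is a direct count of $C(G^{c'})$: I will show it is obtained from $C(G^c)$ by deleting exactly $S_G(v)$ and adjoining the $|S_G(u)|$ freshly introduced symbols $\{\sigma a : a \in S_G(u)\}$. Indeed, any $a \in S_G(v)$ lives only on edges incident to $v$, each of which is recolored, so $a$ disappears; any $a \in C(G) \setminus S_G(v)$ is borne by some edge not incident to $v$ and hence survives; the new symbols are pairwise distinct and not in $C(G)$ by construction, and each $\sigma a$ is realized by the edge $vx$ whenever $c(ux) = a$. Summing gives $c'(G) = c(G) - d^s(v) + d^s(u)$.

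For part (2), suppose $K$ is a rainbow $K_k$ in $G^{c'}$. If $v \notin V(K)$ then every edge of $K$ has the same color in $c$ and in $c'$, so $K$ is already rainbow in $G^c$. Otherwise, since $u,v$ are nonadjacent we have $u \notin V(K)$, and because $N_G(u) = N_G(v)$ the set $K' := (V(K) \setminus \{v\}) \cup \{u\}$ spans another $K_k$ in $G$. I would verify rainbowness of $K'$ in $c$ by comparing edge types: edges inside $V(K) \setminus \{v\}$ keep their color, while for $x \in V(K) \setminus \{v\}$ the edge $ux$ satisfies $c'(vx) = c(ux)$ or $c'(vx) = \sigma c(ux)$ according to whether $c(ux) \notin S_G(u)$. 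Injectivity of the fresh $\sigma a$'s, the observation that any color in $S_G(u)$ lives only on edges at $u$ (so cannot collide with a $K$-color on an edge disjoint from $u$), and the assumed rainbowness of $K$ in $c'$ together force the colors of $K'$ in $c$ to be pairwise distinct.

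For part (3), the approach is to upgrade the given bijection $\tau: S_G(x) \to S_G(y)$ witnessing the symmetry of $x,y$ in $c$ to a bijection $\tau': S_{G^{c'}}(x) \to S_{G^{c'}}(y)$. I would split into the cases $x \notin N_G(v)$ and $x \in N_G(v)$, which coincide with the corresponding cases for $y$ since $N_G(x) = N_G(y)$, and I would treat the degenerate possibility $x = u$ or $y = u$ separately. When no edge incident to $x$ or $y$ is itself recolored, the saturation status of an old color may still shift because $v$-incident edges of that color vanish and new $\sigma$-colored edges appear at $v$; the symmetry of $x,y$ in $c$ ensures these shifts happen in lockstep, so that $\tau$ induces $\tau'$ on the matched old colors. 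When $vx$ and $vy$ are themselves recolored, I would combine $\tau$ with $\sigma$ via the defining formula $c'(vx) = c(ux)$ or $\sigma c(ux)$ to match up the new colorings at $x$ and $y$. This part is the main obstacle, since one has to juggle the two bijections $\tau$ and $\sigma$ at once and verify that saturation is altered consistently at $x$ and at $y$; I expect it to be mostly careful bookkeeping across a handful of prototypical color patterns rather than to require any single clever idea.
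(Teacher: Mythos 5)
Your parts (1) and (2) are correct and follow essentially the same route as the paper: for (2) the paper likewise passes from the rainbow copy containing $v$ to the copy on $(V(K)\setminus\{v\})\cup\{u\}$, and the three facts you invoke (injectivity of the fresh colors $\sigma a$, the fact that a color in $S_G(u)$ cannot occur on an edge avoiding $u$, and rainbowness in $c'$) are exactly what is needed to make that step airtight. Part (3), however, is where the real content of the lemma sits, and there you have only announced a plan: the ``careful bookkeeping across a handful of prototypical color patterns'' that you defer \emph{is} the proof. For the record, your plan coincides with the paper's argument, so there is no hidden obstruction: one splits on whether $xu\in E(G)$ (equivalently, $x\in N_G(v)$, since $N_G(u)=N_G(v)$), and in the nontrivial case one further splits on whether $a=c(xu)$ lies in $S_G(x)$ and/or $S_G(u)$. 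In three of the four subcases the saturation sets of $x$ and $y$ are unchanged and the old bijection $\eta$ still works; only when $a\in S_G(x)\cap S_G(u)$ do the saturation sets shift (by one color each), and one must define the composite bijection $\eta'$ with $\eta'\sigma a=\sigma\eta a$ and $\eta' b=\eta b$ otherwise --- this is precisely your ``combine $\tau$ with $\sigma$ via the defining formula''. You should also note (as the paper does, implicitly) that in this last subcase $c(yu)=\eta a$ automatically lies in $S_G(y)\cap S_G(u)$, so the two vertices really do shift in lockstep. As submitted, though, part (3) is unproved rather than proved.
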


\begin{proof}
  (1) Let $\sigma a$ be the extra color corresponding to $a\in S_G(u)$ by the definition of the symmetrization. Then $C'(G)=(C(G)\backslash S_G(v))\cup\{\sigma a: a\in S_G(u)\}$. Since $\sigma a\notin C(G)$ for every $a\in S_G(u)$, we have that $c'(G)=c(G)-d^s(v)+d^s(u)$.

  (2) Suppose that $H$ is a rainbow $K_k$ in $G^{c'}$. Since $uv\notin E(G)$, either $u$ or $v$ is not contained in $H$. Notice that the only recolored edges in $c'$ are those incident to $v$. If $v\notin V(H)$, then $H$ is also rainbow in $G^c$. Now we assume that $v\in V(H)$ and $u\notin V(H)$. Let $H'=G[(V(H)\backslash\{v\})\cup\{u\}]$. For every vertex $x\in V(H)\backslash\{v\}$, $c'(vx)=c'(ux)$ if $c(ux)\notin S_G(u)$ and $c'(vx)=\sigma c(ux)$ if $c(ux)\in S_G(u)$. This implies that $H'$ is rainbow in $G^{c'}$, and then in $G^c$, as well.

  (3) Let $\eta: S_G(x)\rightarrow S_G(y)$ be the bijection such that for every $z\in N(y)$, $c(yz)=c(xz)$ if $c(xz)\notin S_G(x)$, and $c(yz)=\eta c(xz)$ if $c(xz)\in S_G(x)$. We use $S'_G(x), S'_G(y)$ to denote the colors saturated by $x, y$, respectively, in $G^{c'}$. If $ux\notin E(G)$ (including the case $u=x$ or $u=y$), then $uy,vx,vy\notin E(G)$ since $x,y$ are symmetric and $u,v$ are symmetric in $G$. Note that the symmetrization of $c$ at $v$ to $u$ only change the colors of edges incident to $v$. We see that $S'_G(x)=S_G(x)$, $S'_G(y)=S_G(y)$ and for every $z\in N(x)$, $c'(xz)=c(xz), c'(yz)=c(yz)$. It follows that $x,y$ are symmetric in $G^{c'}$.

  Now we suppose that $xu\in E(G)$, and then $xv,yu,yv\in E(G)$. Let $a=c(xu)$. If $a\notin S_G(x)$ and $a\notin S_G(u)$, then $S'_G(x)=S_G(x)$, $S'_G(y)=S_G(y)$ and $c'(xu)=c'(yu)=c'(xv)=c'(yv)=a$. If $a\notin S_G(x)$ and $a\in S_G(u)$, then  $S'_G(x)=S_G(x)$, $S'_G(y)=S_G(y)$ and $c'(xu)=c'(yu)=a$, $c'(xv)=c'(yv)=\sigma a$. If $a\in S_G(x)$ and $a\notin S_G(u)$, then $S'_G(x)=S_G(x)$, $S'_G(y)=S_G(y)$ and $c'(xu)=c'(xv)=a$, $c'(yu)=c'(yv)=\eta a$. For each case it follows that $x,y$ are symmetric in $G^{c'}$.

  Finally suppose that $a\in S_G(x)\cap S_G(u)$. We have that $c(yu)=\eta a\in S_G(y)\cap S_G(u)$. It follows that $c'(xu)=a, c'(yu)=\eta a, c'(xv)=\sigma a, c'(yv)=\sigma\eta a$ and $S'_G(x)=(S_G(x)\backslash\{c(xv)\})\cup\{\sigma a\}$, $S'_G(y)=(S_G(y)\backslash\{c(yv)\})\cup\{\sigma\eta a\}$. We define a bijection $\eta': S'_G(x)\rightarrow S'_G(y)$ such that $\eta'\sigma a=\sigma\eta a$, and $\eta'b=\eta b$ for all $b\in S'_G(x)\backslash\{\sigma a\}$. Then $x,y$ are symmetric in $G^{c'}$ (with the corresponding bijection $\eta'$).
\end{proof}

\begin{lemma}\label{LeExtremal}
Let $G^c$ be a colored complete multi-partite graph without rainbow $K_k$ such that $c(G)$ is as large as possible. Then\\
(1) if a color $a$ is saturated by a partite set $U$, then $a$ is saturated by a vertex in $U$;\\
(2) if two vertices $x,y$ are in a common partite set, then $d^s(x)=d^s(y)$.
\end{lemma}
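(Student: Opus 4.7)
My plan is to deduce both statements from Lemma \ref{LeSymmetrization} together with the maximality of $c(G)$, with part (1) requiring an additional recoloring argument.

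Part (2) should follow almost directly from the symmetrization lemma. Since $x$ and $y$ lie in a common partite set of the complete multi-partite graph $G$, they are non-adjacent and satisfy $N_G(x)=N_G(y)$, so they are symmetric in $G$. Letting $c'$ be the symmetrization of $c$ at $y$ to $x$, Lemma \ref{LeSymmetrization}(2) gives that $G^{c'}$ has no rainbow $K_k$, while Lemma \ref{LeSymmetrization}(1) yields $c'(G)=c(G)-d^s(y)+d^s(x)$. Maximality of $c(G)$ then forces $d^s(x)\le d^s(y)$, and swapping the roles of $x$ and $y$ yields the reverse inequality.

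For part (1), I will argue by contradiction. Suppose a color $a$ is saturated by the partite set $U$ but by no single vertex of $U$. Every $a$-edge has exactly one endpoint in $U$, so I may partition the $a$-edges as $\bigcup_{u\in U}F_u$, where $F_u$ denotes the set of $a$-edges incident to $u$, and set $U_a=\{u\in U: F_u\neq\emptyset\}$; the hypothesis forces $|U_a|\ge 2$. Fix any $u^*\in U_a$ and, for every other $u\in U_a$, introduce a fresh color $a_u\notin C(G)$. Define $c'$ by recoloring every edge of $F_u$ (for $u\in U_a\setminus\{u^*\}$) with $a_u$, leaving every other edge, including those of $F_{u^*}$, unchanged. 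The color $a$ is still used (on $F_{u^*}$), so $c'(G)=c(G)+|U_a|-1>c(G)$, and by maximality $G^{c'}$ must contain a rainbow copy $H$ of $K_k$.

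The key point is that $V(H)$ meets $U$ in at most one vertex, because $U$ is independent in $G$. Since $H$ is rainbow it must use some new color $a_u$, and as every $a_u$-edge is incident to $u$ we get $u\in V(H)$; then an edge of $H$ colored $a_{u'}$ for some $u'\neq u$ would force $u'\in V(H)$, and an edge of color $a$ in $H$ would force $u^*\in V(H)$, both violating the key point. Hence $H$ contains exactly one recolored edge, and all its other edges keep their original colors, which are pairwise distinct and different from $a$. Reading the coloring of $H$ back in $c$, that single recolored edge becomes an $a$-edge while every other edge retains its distinct non-$a$ color, making $H$ a rainbow $K_k$ in $G^c$, the desired contradiction. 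The main subtlety is the design of the recoloring: assigning a distinct fresh color to every recolored $a$-edge would allow a rainbow $K_k$ in $c'$ to carry several such edges at a single vertex $u\in U_a\setminus\{u^*\}$, and all of them would collapse to color $a$ in $c$, destroying the rainbow; packaging all of $F_u$ under the one new color $a_u$ is exactly what guarantees at most one recolored edge in any rainbow $K_k$.
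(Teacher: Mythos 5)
Your proposal is correct and follows essentially the same route as the paper: part (2) is exactly the symmetrization argument via Lemma \ref{LeSymmetrization}, and part (1) is the same recoloring trick of grouping the $a$-edges by their endpoint in $U$ and giving each group a single fresh color, so that any rainbow $K_k$ in the new coloring meets $U$ in at most one vertex and hence survives as a rainbow $K_k$ in the original coloring. The only (immaterial) difference is that the paper renames all $s$ groups with new colors $a_1,\dots,a_s$ (gaining $s-1$ colors), whereas you keep one group at the old color $a$ and rename the other $|U_a|-1$ groups, which yields the same gain and the same contradiction.
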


\begin{proof}
  (1) Suppose that the color $a$ is not saturated by any vertices in $U$. Let $u_i$, $1\leq i\leq s$, be the vertices in $U$ with $a\in C_G(u_i)$, where $s\geq 2$. We define a coloring $c'$ of $G$ such that $c'(e)=a_i$ if $e$ is incident to $u_i$ and $c(e)=a$, and $c'(e)=c(e)$ otherwise, where $a_1,\ldots,a_s$ are $s$ extra colors. Thus $c'(G)=c(G)-1+s>c(G)$, implying that $G^{c'}$ contains a rainbow $K_k$. Let $H$ be a rainbow $K_k$ in $G^{c'}$. Since $u_1,\ldots,u_s\in U$ which are independent, $H$ contains at most one vertex in $\{u_1,\ldots,u_s\}$. It follows that $H$ is also rainbow in $G^c$, a contradiction.

  (2) Suppose that $d^s(x)>d^s(y)$. Let $c'$ be the symmetrization of $c$ at $y$ to $x$. By Lemma \ref{LeSymmetrization}, $c'(G)>c(G)$ and $G^{c'}$ contains no rainbow $K_k$, contradicting the choice of $c$.
\end{proof}

We say a coloring $c$ of a complete multi-partite graph $G$ is \emph{totaly symmetric} if each two vertices that contained in a common partite set are symmetric in $G^c$.

\begin{lemma}\label{LeTotaly}
Let $G$ be a complete multi-partite graph and $H$ a complete graph. Then there is a totaly symmetric coloring $c$ of $G$ with $c(G)=\ar(G,H)$.
\end{lemma}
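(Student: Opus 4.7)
The plan is to begin with any rainbow-$K_k$-free coloring $c$ of $G$ with $c(G)=\ar(G,K_k)$ and iteratively apply symmetrizations, always toward a fixed representative of each partite set, until the coloring becomes totally symmetric. Concretely, in each partite set $V_i=\{v_{i,1},\ldots,v_{i,n_i}\}$ fix the representative $v_{i,1}$; list the pairs $(v_{i,1},v_{i,j})$ with $j\ge 2$ (over all $i$) in some order, and at step $(i,j)$ replace the current coloring by its symmetrization at $v_{i,j}$ to $v_{i,1}$.

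At every such step, the current coloring is, by induction, still extremal and rainbow-$K_k$-free, so Lemma \ref{LeExtremal}(2) gives $d^s(v_{i,j})=d^s(v_{i,1})$; thus Lemma \ref{LeSymmetrization}(1) preserves the color count and Lemma \ref{LeSymmetrization}(2) preserves rainbow-$K_k$-freeness, which feeds the induction. After the step, $\{v_{i,1},v_{i,j}\}$ is symmetric by construction, and Lemma \ref{LeSymmetrization}(3) guarantees that every previously symmetric pair neither of whose vertices equals the moved vertex $v_{i,j}$ remains symmetric. Symmetrizing always toward the fixed $v_{i,1}$ is precisely what makes this work: each $v_{i,j}$ with $j\ge 2$ is moved only in its own step, and the representatives $v_{i,1}$ are never moved, so no symmetry established earlier is destroyed. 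Thus in the final coloring $c^{\ast}$, for every $i$ and every $j\ge 2$, $v_{i,1}$ and $v_{i,j}$ are symmetric in $G^{c^{\ast}}$.

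It remains to upgrade this to a statement about all pairs in each $V_i$. For this I will verify a short transitivity claim: if $u,v,w$ lie in a common partite set (so are pairwise non-adjacent) and both $(u,v)$ and $(u,w)$ are symmetric pairs in $G^{c^{\ast}}$, then so is $(v,w)$. Non-adjacency forces $S(u),S(v),S(w)$ to be pairwise disjoint, since a color saturated by two non-adjacent vertices would have to colour the non-existent edge between them; then, given witnessing bijections $\sigma_v\colon S(u)\to S(v)$ and $\sigma_w\colon S(u)\to S(w)$, the composition $\sigma_w\sigma_v^{-1}\colon S(v)\to S(w)$ witnesses the symmetry of $v$ and $w$ after a routine case split on whether $c^{\ast}(vx)$ lies in $S(v)$ for each common neighbour $x$. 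Applied with $u=v_{i,1}$, this shows that every pair in $V_i$ is symmetric, so $c^{\ast}$ is totally symmetric. The main subtlety is the bookkeeping in the iterative step; once one sees that Lemma \ref{LeSymmetrization}(3) combined with the trick of always using a fixed target makes the symmetries accumulate monotonically, the rest of the argument is essentially mechanical.
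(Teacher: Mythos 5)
Your proposal is correct and follows essentially the same route as the paper: start from an extremal coloring and repeatedly apply the symmetrization operation, using Lemma \ref{LeExtremal}(2) with Lemma \ref{LeSymmetrization}(1) to preserve the color count, Lemma \ref{LeSymmetrization}(2) to preserve rainbow-$K_k$-freeness, and Lemma \ref{LeSymmetrization}(3) to accumulate symmetric pairs. The only differences are organizational: by always symmetrizing toward a fixed representative you make termination explicit (the paper merely asserts the process terminates), at the price of needing your transitivity claim --- which is valid, since $c(ux)\notin S(u)$ forces $c(vx)=c(ux)\notin S(v)$ (the edge $ux$ is not incident to $v$) and $c(ux)\in S(u)$ forces $c(vx)=\sigma_v c(ux)\in S(v)$, so $\sigma_w\sigma_v^{-1}$ indeed witnesses the symmetry of $(v,w)$ --- whereas the paper avoids transitivity by symmetrizing arbitrary non-symmetric pairs until none remain.
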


\begin{proof}
  Suppose $c_0$ is an extremal coloring for $\ar(G,H)$, i.e., $G^{c_0}$ contains no rainbow $H$ and $c_0(G)=\ar(G,H)$. Let $U_i$, $1\leq i\leq r$, be the partite sets of $G$. We define a series of colorings $c_1,c_2,\ldots,c_t$ of $G$ such that\\
  (i) for each $j$, $0\leq j<t$, there are two distinct vertices $u_i,u'_i\in U_i$ for some $i$, $1\leq i\leq r$, such that $u_i,u'_i$ are not symmetric in $G^{c_j}$ and $c_{j+1}$ is the symmetrization of $c_j$ at $u'_i$ to $u_i$; and \\
  (ii) for each two vertices $u_i,u'_i\in U_i$, $1\leq i\leq r$, $u_i,u'_i$ are symmetric in $G^{c_t}$.\\
  By Lemma \ref{LeSymmetrization} (3), the operation above is terminable since $|V(G)|$ is finite. Notice that $c_t$ is a totaly symmetric coloring of $G$.

  By Lemma \ref{LeSymmetrization} (2), each $G^{c_k}$ contains no rainbow $H$. By Lemma \ref{LeExtremal}, each two vertices of $G$ in a common partite set have the same saturated color degree for all colorings $c_j$, $1\leq j\leq t$. By Lemma \ref{LeSymmetrization} (1), $c_0(G)=c_1(G)=\cdots=c_t(G)=\ar(G,H)$. Therefore, $c_t$ is a totaly symmetric coloring of $G$ with $c_t(G)=\ar(G,H)$.
\end{proof}

\subsection{Blow-up of graphs and colorings}

Let $G$ be a graph on $V(G)=\{v_1,\ldots,v_n\}$, and $f: V(G)\rightarrow\mathbb{N}$ be a size function. The \emph{blow-up} of $G$ with $f$ is the graph $\mathcal{B}(G,f)$ on vertex set $\bigcup\{U_i: v_i\in V(G)\}$, where $U_i=\{u_i^1,\ldots,u_i^{f(v_i)}\}$ is a set corresponding to $v_i$ ($U_i\cap U_j=\emptyset$ if $v_i\neq v_j$), such that for each $u_i^s, u_j^t$, $u_i^su_j^t\in E(\mathcal{B}(G,f))$ if and only if $v_iv_j\in E(G)$. Note that $K_{n_1,\ldots,n_r}$ is a blow-up of $K_r$ with the size function $f(v_i)=n_i$, $1\leq i\leq r$.

Let $G^c$ be a colored graph and $\mathcal{B}(G,f)$ be a blow-up of $G$. We define the \emph{blow-up} of $G^c$, denoted by $\mathcal{B}(G^c,f)$, as the colored graph $\mathcal{B}(G,f)$ with a coloring (also denoted as $c$) such that\\
(1) if $a\in S^0(G)$, then arrange $c(u_i^su_j^t)=a$ if $c(v_iv_j)=a$; \\
(2) if $b\in S^1(G)$ is saturated by a vertex $v_i$, then we define a set of $f(v_i)$ colors $\{b_s: 1\leq s\leq f(v_i)\}$, and arrange $c(u_i^su_j^t)=b_s$ if $c(v_iv_j)=b$; \\
(3) if $c\in S^2(G)$ is saturated by two vertices $v_i,v_j$, then we define a set of $f(v_i)f(v_j)$ colors $\{c_{s,t}: 1\leq s\leq f(v_i), 1\leq t\leq f(v_j)\}$, and arrange $c(u_i^su_j^t)=c_{s,t}$.

In the following of the paper, we always set $H=K_k$, $K=K_r$ with vertex set $V(K)=\{v_1,v_2,\ldots,v_r\}$, and $G=K_{n_1,\ldots,n_r}$ with partite sets $U_i$, $1\leq i\leq r$, where $U_i=\{u_i^1,u_i^2,\ldots,u_i^{n_i}\}$. We let $f$ be the size function on $V(K)$ with $f(v_i)=n_i$, $1\leq i\leq r$.

\begin{theorem}\label{ThUnbalanced}
For all $r\geq k\geq 3$ and $n_1\geq n_2\geq\cdots\geq n_r$, we have
$$\ar(K_{n_1,\ldots,n_r},K_k)=\max\{c(\mathcal{B}(K_r^c,f)): c\mbox{ is a coloring of }K_r\mbox{ without rainbow }K_k\},$$
where $f$ is a size function on $V(K_r)$ with $f(v_i)=n_i$, $1\leq i\leq r$.
\end{theorem}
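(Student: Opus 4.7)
The plan is to prove both inequalities separately. For the lower bound $\ar(K_{n_1,\ldots,n_r},K_k)\geq\max\{c(\mathcal{B}(K_r^c,f))\}$, I verify directly that if $c$ is a coloring of $K_r$ without rainbow $K_k$, then $\mathcal{B}(K_r^c,f)$ also contains no rainbow $K_k$. Any $K_k$ in $\mathcal{B}(K_r^c,f)$ uses vertices $u_{i_1}^{s_1},\ldots,u_{i_k}^{s_k}$ from $k$ distinct partite sets (since each $U_i$ is independent), and its edges inherit colors from the $K_k$ on $v_{i_1},\ldots,v_{i_k}$ in $K_r$ via the three blow-up rules. A short check shows that if two edges of the base $K_k$ share a color $x$ in $c$, then the two corresponding blow-up edges must share a color: both get $x$ itself when $x\in S^0(K_r^c)$, both get $x_{s_\alpha}$ indexed by the shared endpoint $v_{i_\alpha}$ when $x\in S^1(K_r^c)$, and the $S^2$ case cannot arise for two distinct edges. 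Hence a rainbow $K_k$ in $\mathcal{B}(K_r^c,f)$ would force a rainbow $K_k$ in $K_r^c$.

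For the upper bound I would invoke Lemma~\ref{LeTotaly} to choose a totally symmetric extremal coloring $c^*$ of $G=K_{n_1,\ldots,n_r}$, and exhibit a coloring $c'$ of $K_r$ with no rainbow $K_k$ such that $c^*=\mathcal{B}(K_r^{c'},f)$. The key preparatory step is a rigidity claim: for every pair $\{i,j\}$, the color pattern on the bipartite subgraph $G_{ij}$ between $U_i$ and $U_j$ falls into one of four types, namely (A) monochromatic with a color in $S^0(G)$; ($B_i$) row-monochromatic with $n_i$ distinct row colors each in $S^1(G)$ and saturated by a vertex of $U_i$; ($B_j$) the column analogue; or (C) all entries distinct, with each color in $S^2(G)$. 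This dichotomy is forced by total symmetry, because any two vertices of $U_i$ have matching edge colors up to the bijection renaming their saturated colors, and a short case analysis rules out mixing $S^0$, $S^1$, and $S^2$ colors within a single $G_{ij}$.

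I would then use the extremality of $c^*$ via Lemma~\ref{LeExtremal}(1) to pin down how these local types assemble into families of colors. For a Type-A color $a$ with color class $\bigcup_{\{i,j\}\in\mathcal{P}_a}U_i\times U_j$, if every pair in $\mathcal{P}_a$ contained a fixed vertex $v_l$, then $a$ would be saturated by the partite set $U_l$, hence by a single vertex of $U_l$ by Lemma~\ref{LeExtremal}(1), contradicting $a\in S^0(G)$. Similarly, a Type-$B_i$ family targeting only one partite set $U_j$ would make each $a^s$ saturated by $U_j$, forcing $a^s$ onto a single edge (by Lemma~\ref{LeExtremal}(1), since $n_j\ge 2$) and hence into $S^2(G)$, a contradiction. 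So Type-A color classes span at least three vertices of $K_r$ and every Type-$B_i$ family has at least two target partite sets; these are exactly the conditions ensuring that the collapsed coloring $c'$ on $K_r$, defined by setting $c'(v_iv_j)$ equal to the common (family) color on $G_{ij}$, places that color in $S^0(K_r^{c'})$, $S^1(K_r^{c'})$, or $S^2(K_r^{c'})$ in accordance with the type of $G_{ij}$.

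A straightforward edge-by-edge verification then shows that $\mathcal{B}(K_r^{c'},f)$ reproduces $c^*$ exactly, giving $c^*(G)=c'(\mathcal{B}(K_r^{c'},f))$. The absence of a rainbow $K_k$ in $c'$ comes for free: if $c'$ admitted a rainbow $K_k$ on $v_{i_1},\ldots,v_{i_k}$, then the $K_k$ on $u_{i_1}^1,\ldots,u_{i_k}^1$ in $\mathcal{B}(K_r^{c'},f)=c^*$ would also be rainbow (distinct base colors give distinct derived colors under each of the three blow-up rules), contradicting the choice of $c^*$. The main obstacle is the four-type rigidity in the second paragraph, in particular verifying that intra-$G_{ij}$ mixing of $S^0$, $S^1$, and $S^2$ colors is incompatible with total symmetry; once that structural dichotomy is in hand, the two applications of Lemma~\ref{LeExtremal}(1) close the argument cleanly.
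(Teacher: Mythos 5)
Your proposal is correct and follows essentially the same route as the paper: both directions hinge on Lemma~\ref{LeTotaly} to obtain a totally symmetric extremal coloring and on Lemma~\ref{LeExtremal}(1) to show that such a coloring is exactly a blow-up $\mathcal{B}(K_r^{c'},f)$ of some rainbow-$K_k$-free coloring $c'$ of $K_r$, with the same easy lifting argument for the lower bound. The only difference is bookkeeping: you classify the bipartite blocks $G_{ij}$ into four types and then assemble them into color families, whereas the paper defines $c'$ by restricting to $\{u_1^1,\ldots,u_r^1\}$ and verifies the three blow-up rules color by color.
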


\begin{proof}
  By Lemma \ref{LeTotaly}, there is an extremal coloring $c$ for $\ar(G,H)$ that is totaly symmetric. We define a coloring of $K$, also denoted by $c$, such that $c(v_iv_j)=c(u_i^1u_j^1)$ (i.e., $K^c$ is isomorphic to the colored subgraph of $G^c$ induced by $\{u_i^1,\ldots,u_r^1\}$). So $K^c$ is a colored $K_r$ without rainbow $H=K_k$. Recall that $G$ is a blow-up of $K$ with the size function $f$. We will show that $G^c$ is the blow-up of $K^c$. Notices that $S^i(K)$, $i=0,1,2$, is the set of colors that saturated by $i$ vertices in $K^c$.

  Assume first that $a$ is a color in $S^0(K)$. For any edge $v_iv_j$ of $K$ with color $a$, $a$ is not saturated by $v_i,v_j$ in $K^c$. This implies that $a$ is not saturated by $u_i^1,v_i^1$ in $G^c$. Since $G^c$ is totaly symmetric, we have that $c(u_i^su_j^t)=a$ for all $u_i^s\in U_i$, $u_j^t\in U_j$.

  Assume second that $b$ is a color in $S^1(K)$ saturated by $v_i$. We claim that $b$ is saturated by $u_i^1$ in $G^c$ as well. Since $b\in S^1(K)$, $b$ cannot be saturated by any vertices other than $u_i^1$ in $G^c$. If $b$ is not saturated by $u_i^1$ in $G^c$, then $b$ is not saturated by $U_i$ in $G^c$ by Lemma \ref{LeExtremal} (1). Thus there is an edge $u_{j_1}^su_{j_2}^t$ with $j_1,j_2\neq i$ and $c(u_{j_1}^su_{j_2}^t)=b$. Since $G^c$ is totaly symmetric and $b$ is not saturated by $u_{j_1}^s,u_{j_2}^t$, we see that all edges between $U_{j_1}$ and $U_{j_2}$ are of color $b$, specially $c(u_{j_1}^1u_{j_2}^1)=b$. It follows that $c(v_{j_1}v_{j_2})=b$, contradicting that $b$ is saturated by $v_i$ in $K^c$. Now as we claimed, $b$ is saturated by $u_i^1$ in $G^c$. Since $G^c$ is totaly symmetric, there are colors $b_1=b,b_2,\ldots,b_{n_i}\in C(G)$ such that $c(u_i^su_j^t)=b_s$ when ever $c(u_i^1u_j^1)=b$ (or equally, $c(v_iv_j)=b$).

  Assume third that $c$ is a color in $S^2(K)$ saturated by $v_i,v_j$. We claim that $c$ is saturated by $u_i^1,u_j^1$ in $G^c$ as well. Suppose otherwise. By Lemma \ref{LeExtremal} (1), there is a vertex $u_\ell^t\in U_\ell$ with $\ell\neq i,j$ such that $c\in C_G(u_\ell^t)$. Recall that $c(u_i^1u_j^1)=c$, implying that $c$ is not saturated by $u_\ell^t$. Since $G^c$ is totaly symmetric, we have that $c\in C_G(u_\ell^1)$, and then $c\in C_K(v_\ell)$, contradicting that $c\in S^2(K)$. Thus as we claimed, $u_i^1u_j^1$ is the unique edge of $G$ of color $c$. Since $G^c$ is totaly symmetric, there are $n_in_j$ colors $c_{s,t}$, $1\leq s\leq n_i$, $1\leq t\leq n_j$, where $c_{1,1}=c$, such that $c(u_i^su_j^t)=c_{s,t}$.

  By the analysis above, we see that $G^c$ is the blow-up of $K^c$. This proves the upper bound of the theorem.

  On the other hand, let $c$ be an arbitrary coloring of $K$ without rainbow $H=K_k$ and let $G^c=\mathcal{B}(K^c,f)$. If $G^c$ contains a rainbow $K_k$, say with vertex set $\{u_{i_1}^{s_1},u_{i_2}^{s_2},\ldots,u_{i_k}^{s_k}\}$, then $\{v_{i_1},v_{i_2},\ldots,v_{i_k}\}$ induces a rainbow $K_k$ in $K$, a contradiction. Thus we have that $G^c$ contains no rainbow $K_k$, which proves the lower bound of the theorem.
\end{proof}

\section{Proofs of Theorems \ref{ThkPartite} and \ref{ThBalanced}}

\begin{proof}[Proof of Theorem \ref{ThkPartite}]
In this theorem we deal with the case $k=r$. We call the following coloring $c$ of $G$ the \emph{normal coloring}: First let $K^c$ be a colored $K=K_k$ such that all edges have distinct colors with the only exception $c(v_{k-2}v_k)=c(v_{k-1}v_k)$, and then let $G^c=\mathcal{B}(K^c,f)$. It follows that the normal coloring $c$ of $G$ contains no rainbow $K_k$ with $c(G)=\sum_{1\leq i<j\leq k}n_in_j-n_k(n_{k-1}+n_{k-2}-1)$. This proves the lower bound of the theorem.

On the other hand, we let $c$ be an arbitrary coloring of $K$ which is not rainbow, and let $G^c=\mathcal{B}(K^c,f)$. There are two edges of the same color in $K^c$, implying that $S^0(K)\cup S^1(K)\neq\emptyset$.

If there is a color $a\in S^0(K)$, then there are at least two edges $v_{i_1}v_{j_1},v_{i_2}v_{j_2}$ of color $a$. By the definition of the blow-up of $K^c$, all edges of $G$ between $U_{i_1},U_{j_1}$, and between $U_{i_2},U_{j_2}$, have the same color $a$ in $G^c$. This implies that $$c(G)\leq\sum_{1\leq i<j\leq k}n_in_j-n_{i_1}n_{j_1}-n_{i_2}n_{j_2}+1\leq\sum_{1\leq i<j\leq k}n_in_j-n_k(n_{k-1}+n_{k-2}-1).$$
If there is a color $b\in S^1(K)$, say saturated by $v_{i_1}$, then there are two vertices $v_{j_1},v_{j_2}\in V(K)$ with $c(v_{i_1}v_{j_1})=c(v_{i_1}v_{j_2})=b$. By the definition of the blow-up of $K^c$, there are $n_{i_1}$ colors $b_1,b_2,\ldots,b_{n_{i_1}}$ such that all edges of $G^c$ between $U_{i_1}$ and $U_{j_1}\cup U_{j_2}$, have the colors in $\{b_1,b_2,\ldots,b_{n_{i_1}}\}$. This implies that $$c(G)\leq\sum_{1\leq i<j\leq k}n_in_j-n_{i_1}(n_{j_1}+n_{j_2}-1)\leq\sum_{1\leq i<j\leq k}n_in_j-n_k(n_{k-1}+n_{k-2}-1).$$

In each case we have that $c(G)\leq\sum_{1\leq i<j\leq k}n_in_j-n_k(n_{k-1}+n_{k-2}-1)$, which shows the upper bond of the theorem by Theorem \ref{ThUnbalanced}.
\end{proof}

\begin{proof}[Proof of Theorem \ref{ThBalanced}]
In this theorem we deal with the case $n_1=n_2=\cdots n_r=t$. If $k=r$, then the result can be deduced by Theorem \ref{ThkPartite} directly. So we only deal with the case $r>k$.

By Theorem \ref{ThUnbalanced}, $\ar(G,H)=\max\{c(\mathcal{B}(K^c,f)): K^c\mbox{ contains no rainbow }K_k\}$. Let $T_{n,k}$ denote the Tur\'an graph, which is the complete $k$-partite graph of order $n$ such that each two partite sets have size difference at most one. Tur\'an's theorem states that $\ex(K_n,K_{k+1})=e(T_{n,k})$. We define the \emph{Tur\'an coloring} $c$ of $G=K_r^t$ as follows: First let $c$ be a coloring of $K=K_r$ with a rainbow $T_{r,k-2}$ and the edges in $E(K)\backslash E(T_{r,k-2})$ having one extra common color, and then let $G^c=\mathcal{B}(K^c,f)$. One can compute that the Tur\'an coloring of $G$ contains no rainbow $H$ and has $t^2\ex(K_r,K_{k-1})+1$ colors. This proves the lower bond of the theorem.

On the other hand, let $c$ be an arbitrary  coloring of $K$ without rainbow $H$, and let $G^c=\mathcal{B}(K^c,f)$. We will show that $c(G)\leq t^2\tau+1$, where $\tau:=\ex(K_r,K_{k-1})$. We set $s_i=|S^i(K)|$, $i=0,1,2$. Since $K^c$ contains no rainbow $K_k$, by Theorem \ref{ThSc}, $c(K)=s_0+s_1+s_2\leq\tau+1$. Recall that $f(v_i)=t$ for all $i$, $1\leq i\leq r$. By the definition of the blow-up of $K^c$, $c(G)=s_0+ts_1+t^2s_2$.

If $t=1$, then clearly $c(G)=c(K)\leq\tau+1$. Now suppose that $t\geq 2$. If $s_0+s_1=0$, then $K$ is rainbow and contains a rainbow $H$, a contradiction. Thus we have that $s_0+s_1\geq 1$. If $s_0+s_1\geq 2$, then $s_2\leq\tau-1$ and $c(G)=s_0+ts_1+t^2s_2\leq 2t+t^2(\tau-1)<t^2\tau+1$. Now assume that $s_0+s_1=1$.

If $s_0=0$ and $s_1=1$, then the only non-exclusive color of $K^c$ is saturated by exactly one vertex, say $v_i$. It follows that $K-v_i$ is rainbow and contains a rainbow $H$ (recall that we assume that $r>k$), a contradiction. Thus we conclude that $s_0=1$ and $s_1=0$. Thus $c(G)=s_0+ts_1+t^2s_2\leq t^2\tau+1$. This shows the upper bond of the theorem.
\end{proof}

\section{The extremal colorings}

In this section we consider the extremal colorings for the anti-Ramsey numbers of the complete graph $H$ in the complete multi-partite graphs $G$.

Two colored graphs $G_1^{c_1}$ and $G_2^{c_2}$ are \emph{isomorphic} if there are bijections $\rho: V(G_1)\rightarrow V(G_2)$ and $\sigma: C_1(G_1)\rightarrow C_2(G_2)$ such that: (1) $uv\in E(G_1)$ if and only if $\rho(u)\rho(v)\in E(G_2)$; and (2) if $uv\in E(G_1)$, then $\sigma(c_1(uv))=c_2(\rho(u)\rho(v))$.

\subsection{For $K_k$ in complete $k$-partite graphs}

For the case $r=k$, we recall that the normal coloring of $G=K_{n_1,\ldots,n_k}$ is an extremal coloring for $\ar(G,H)$. However, there may have other extremal colorings.

The book $B_n=K_{n,1,1}$ is the graph consisting of $n$ triangles common to an edge. Set $V(B_n)=\{x,y,z_1,z_2,\ldots,z_n\}$ where $xyz_ix$, $1\leq i\leq n$, are $n$ triangles. By Theorem \ref{ThFaGyLiXi} or \ref{ThkPartite}, $\ar(B_n,K_3)=n+1$.

\medskip\noindent\textbf{Construction 1.} Let $c$ be a coloring of $B_n$ with $C(B_n)=\{a_0,a_1,\ldots,a_n\}$ such that (1) $c(xy)=a_0$, and (2) for every $z_i$, $1\leq i\leq n$, either $c(xz_i)=c(yz_i)=a_i$, or $c(xz_i)=a_i, c(yz_i)=a_0$, or $c(xz_i)=a_0, c(yz_i)=a_i$.

\medskip Notice that a coloring of $B_n$ contains no rainbow $K_3$ if and only if its every triangle has two edges of the same color. Thus the extremal colorings for $\ar(B_n,K_3)$ are exactly those we described in Construction 1. There are four non-isomorphic extremal colorings for $\ar(B_2,K_3)$ (see Figure 1). We notice that the coloring $c_1$ of $B_2$ in Figure 1 are normal.

\begin{center}
\begin{picture}(350,100)
\thicklines

\put(20,30){\put(0,30){\color{green}\line(1,-1){30}} \put(0,30){\color{red}\line(1,0){60}} \put(0,30){\color{blue}\line(1,1){30}} \put(30,0){\color{red}\line(1,1){30}} \put(30,60){\color{red}\line(1,-1){30}}
\put(0,30){\circle*{4}} \put(30,0){\circle*{4}} \put(30,60){\circle*{4}} \put(60,30){\circle*{4}}
\put(22,-20){$B_2^{c_1}$} }

\put(100,30){\put(0,30){\color{green}\line(1,-1){30}} \put(0,30){\color{red}\line(1,0){60}} \put(0,30){\color{red}\line(1,1){30}} \put(30,0){\color{red}\line(1,1){30}} \put(30,60){\color{blue}\line(1,-1){30}}
\put(0,30){\circle*{4}} \put(30,0){\circle*{4}} \put(30,60){\circle*{4}} \put(60,30){\circle*{4}}
\put(22,-20){$B_2^{c_2}$} }

\put(180,30){\put(0,30){\color{green}\line(1,-1){30}} \put(0,30){\color{red}\line(1,0){60}} \put(0,30){\color{blue}\line(1,1){30}} \put(30,0){\color{red}\line(1,1){30}} \put(30,60){\color{blue}\line(1,-1){30}}
\put(0,30){\circle*{4}} \put(30,0){\circle*{4}} \put(30,60){\circle*{4}} \put(60,30){\circle*{4}}
\put(22,-20){$B_2^{c_3}$} }

\put(260,30){\put(0,30){\color{green}\line(1,-1){30}} \put(0,30){\color{red}\line(1,0){60}} \put(0,30){\color{blue}\line(1,1){30}} \put(30,0){\color{green}\line(1,1){30}} \put(30,60){\color{blue}\line(1,-1){30}}
\put(0,30){\circle*{4}} \put(30,0){\circle*{4}} \put(30,60){\circle*{4}} \put(60,30){\circle*{4}}
\put(22,-20){$B_2^{c_4}$} }

\end{picture}

\small Figure 1. Extremal colorings for $\ar(B_2,K_3)$.
\end{center}

\begin{theorem}\label{ThExkpartite}
  Let $c$ be an extremal coloring for $\ar(G,H)$, where $G=K_{n_1,\ldots,n_k}$, $H=K_k$, $k\geq 3$ and $n_1\geq\cdots\geq n_k$. Then $G^c$ satisfies one of the following constructions (up to isomorphism):\\
  (1) $k\geq 4$, $n_{k-3}=1$, and each two edges have distinct colors in $G^c$ with the only exception $c(u_{k-3}^1u_{k-2}^1)=c(u_{k-1}^1u_k^1)$;\\
  (2) $n_{k-1}=1$, the last three partite sets induced $B_{n_{k-2}}^c$ as in Construction 1, and each two edges have distinct colors in $G^c$ unless they are both in $B_{n_{k-2}}^c$;\\
  (3) for every vertex $u_k^s\in U_k$, there are two partite sets $U_i,U_j$, $1\leq i<j<k$, with $n_i+n_j=n_{k-2}+n_{k-1}$ such that the edges between $u_k^s$ and $U_i\cup U_j$ have the same color; and apart from that, all edges have distinct colors in $G^c$.
\end{theorem}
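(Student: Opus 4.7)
The plan is to establish both directions of the characterization.

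The forward direction --- that each construction is extremal --- reduces to direct counting combined with checking that no rainbow $K_k$ appears. For Constructions (1) and (3) the shared-color structure is transparent and the color count works out. For Construction (2), the key is that the book $B_{n_{k-2}}^c$ built from Construction 1 has the property that every triangle contains two edges of the same color; hence no rainbow $K_3$ lies in the book, which prevents any rainbow $K_k$ in $G$.

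For the converse, fix an extremal coloring $c$ of $G$. First apply the symmetrization procedure of Lemma \ref{LeTotaly} to obtain a totally symmetric extremal coloring $c_t$, which by the proof of Theorem \ref{ThUnbalanced} is the blow-up $\mathcal{B}(K^{c^*},f)$ of some coloring $c^*$ of $K=K_k$ without rainbow $K_k$. Express the total savings in $c_t$ as a sum of per-color contributions ($\sum_l n_{i_l}n_{j_l}-1$ for $S^0$-colors, $n_{i_1}(\sum_l n_{j_l}-1)$ for $S^1$-colors, and $0$ for $S^2$-colors), and require this sum to equal exactly $n_k(n_{k-1}+n_{k-2}-1)$ while $c^*$ is not rainbow; a minimization argument then forces $c^*$ to have a unique non-exclusive color, which is either (A) an $S^0$-color on two disjoint edges, or (B) an $S^1$-color on two edges sharing a vertex.

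In Case (A), tightness $n_{i_1}n_{j_1}+n_{i_2}n_{j_2}-1=n_k(n_{k-1}+n_{k-2}-1)$ combined with the rearrangement inequality and $n_1\geq\cdots\geq n_k$ forces $n_{k-3}=n_{k-2}=n_{k-1}=n_k=1$, and up to isomorphism $c_t$ agrees with Construction (1). In Case (B), tightness $n_{i_1}(n_{j_1}+n_{j_2}-1)=n_k(n_{k-1}+n_{k-2}-1)$ forces $n_{i_1}=n_k$ and the unordered pair $\{n_{j_1},n_{j_2}\}$ to equal $\{n_{k-1},n_{k-2}\}$ (modulo ties in the $n_i$), so $c_t$ is a totally symmetric blow-up fitting Construction (3) (and, when $n_{k-1}=n_k=1$, also Construction (2)). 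To handle non-symmetric extremal colorings, I then reverse the symmetrization: each symmetrization step that preserves the color count corresponds to a local modification within a partite set that is ``free'' with respect to the extremality bound. Working back from $c_t$ to $c$, these free modifications amount precisely to (i) allowing the saturating pair $(U_{j_1^{(s)}},U_{j_2^{(s)}})$ to depend on $s\in\{1,\ldots,n_k\}$ subject only to $n_{j_1^{(s)}}+n_{j_2^{(s)}}=n_{k-1}+n_{k-2}$, which is the flexibility of Construction (3), and (ii) in the sub-case $n_{k-1}=n_k=1$, letting the book coloring follow the per-leaf three-way choice of Construction 1, which gives Construction (2).

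The main obstacle will be the reverse-symmetrization step --- showing that the modifications just listed are the only ones preserving both extremality and rainbow-$K_k$-freeness. For any other candidate modification, I would either locate a specific $K_k$-copy that becomes rainbow (contradicting the hypothesis) or exhibit a color refinement that strictly increases $c(G)$ (contradicting extremality). Lemma \ref{LeExtremal} will be used throughout to control saturation degrees under these modifications, and the formula of Theorem \ref{ThkPartite} provides the benchmark that any valid modification must respect.
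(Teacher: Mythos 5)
Your plan for the \emph{totally symmetric} extremal colorings is sound and follows the paper's own machinery: symmetrize via Lemma \ref{LeTotaly}, realize the result as a blow-up $\mathcal{B}(K^{c^*},f)$ as in the proof of Theorem \ref{ThUnbalanced}, and then do the per-color savings count against the benchmark $n_k(n_{k-1}+n_{k-2}-1)$ from Theorem \ref{ThkPartite}. Your Cases (A) and (B) and their tightness analyses are essentially correct and match what the paper does implicitly in the upper-bound part of Theorem \ref{ThkPartite}.

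The genuine gap is the ``reverse-symmetrization'' step, and it is not a technical loose end --- it is where almost all of the content of this theorem lives. Symmetrization at $v$ to $u$ overwrites the coloring at $v$, so it is a many-to-one, non-invertible operation; the set of extremal colorings you want to characterize is exactly the union of the fibers of this (choice-dependent) procedure, and you have only asserted, not argued, that the fibers consist precisely of modifications (i) and (ii). Two concrete difficulties: first, Lemma \ref{LeSymmetrization}(2) only transfers rainbow copies \emph{from} the symmetrized coloring \emph{to} the original, so rainbow-$K_k$-freeness of a candidate preimage cannot be read off from $c_t$ and must be verified directly on $c$; second, the colorings of Construction (2) (e.g.\ the books $B_2^{c_2},B_2^{c_3},B_2^{c_4}$ of Figure 1, where a leaf $z_i$ may have $c(xz_i)=a_i$, $c(yz_i)=a_0$) are genuinely not totally symmetric, so they are invisible in $c_t$ and must be recovered by an argument you have not supplied. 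The paper avoids this entirely by splitting on $n_k$: for $n_k\geq 2$ it proves a chain of structural claims about saturated colors (every color is saturated by a vertex; the edges of each $S^1$-color run from one vertex to exactly two \emph{full} partite sets; all saturating vertices lie in one partite set; each saturates exactly one color), which forces Construction (3) without ever fully symmetrizing; for $n_k=1$ it runs an induction on $|V(G)|$, deleting a vertex from the first partite set $U_\ell$ with $n_\ell>n_{\ell+1}$ and using the exact increment $\ar(G,H)-\ar(G-u_\ell,H)$ together with saturated-color-degree counting to pin down Constructions (1), (2) and the non-symmetric book colorings. Some such direct argument on the original coloring $c$ is needed; as written, your step 3 is close to circular, since ``the only free modifications are (i) and (ii)'' is essentially a restatement of the theorem.
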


We remark that if $n_{k-3}>n_{k-2}$ and $n_{k-1}\geq 2$, then the extremal coloring for $\ar(G,H)$ is unique.

\begin{proof}
  We distinguish two cases based on the value of $n_k$.

  \begin{case}
    $n_k=1$.
  \end{case}

  In this case $U_k$ has only one vertex $u_k$. We prove the case by induction on the order of $G$. If $n_1=1$, then $G=H=K_k$, and $\ar(G,H)=e(G)-1$. Thus an edge-coloring $c$ of $G$ is extremal for $\ar(G,H)$ if and only if there are two edges $e_1,e_2$ of the same color and apart from that, all edges have distinct colors in $G^c$. If $e_1,e_2$ are nonadjacent, then $G^c$ satisfies (1), and if $e_1,e_2$ are adjacent, then $G^c$ satisfies (3) (by possibly reordering the partite sets of the same size). So we assume that $n_1\geq 2$. Since $n_k=1$, there is a smallest index $\ell$, $1\leq\ell<k$, such that $n_\ell>n_{\ell+1}$.

  \begin{subcase}
    $1\leq\ell\leq k-3$.
  \end{subcase}

  Let $u_\ell\in U_\ell$ and $G'=G-u_\ell$. Thus $G'=K_{n_1,\ldots,n_\ell-1,\ldots,n_k}$. From Theorem \ref{ThkPartite}, one can compute that $\ar(G,H)-\ar(G',H)=n_\ell(\sum_{i=1}^kn_i-n_\ell)=d_G(u_\ell)=:d$. If $d^s_G(u_\ell)<d$, then $c(G')>\ar(G',H)$ and $G'^c$ contains a rainbow $H$, a contradiction. Thus we have that $d^s_G(u_\ell)=d$ and by Lemma \ref{LeExtremal} (2), $d^s_G(u'_\ell)=d$ for every $u'_\ell\in U_\ell$. This implies that every edges incident to $u'_\ell$ has an exclusive color for every $u'_\ell\in U_\ell$.

  Notice that $c(G')=\ar(G',H)$. We have that $c$ (restricting on $G'$) is an extremal coloring for $\ar(G',H)$. By induction hypothesis, $G'^c$ satisfies (1)(2) or (3). We denote by $U'_i$, $1\leq i\leq k$, the partite sets of $G'$ as describing in (1)(2)(3) (the partite sets of $G$ and $G'$ may have different order in case some sets have the same size). Since $n_\ell\geq 2$, there is a vertex $u'_\ell\in U_i$ other than $u_\ell$ such that all edges incident to $u'_\ell$ has an exclusive color in $G'^c$. It follows that $u'_\ell\notin\bigcup_{i=k-3}^kU'_i$ for (1) and $u'_\ell\notin\bigcup_{i=k-2}^kU_i$ for (2)(3). Since all edges incident to $u_\ell$ has an exclusive color in $G^c$, we see that $G^c$ satisfies (1)(2) or (3) as well.

  \begin{subcase}
    $\ell=k-2$.
  \end{subcase}

  Let $u_{k-2}\in U_{k-2}$ and $G'=G-u_{k-2}$. Since $n_1=n_2=\ldots=n_{k-2}$, by possibly reorder the partite sets, we can assume that $u_{k-2}$ has the smallest saturated color degree in $G^c$ among all vertices in $\bigcup_{i=1}^{k-2}U_i$. From Theorem \ref{ThkPartite}, one can compute that $\ar(G,H)-\ar(G',H)=n_{k-2}(\sum_{i=1}^kn_i-n_{k-2})-1=d_G(u_{k-2})-1=:d$. If $d^s_G(u_{k-2})<d$, then $c(G')>\ar(G',H)$ and $G'^c$ contains a rainbow $H$, a contradiction. Thus we have that $d^s_G(u_{k-2})\geq d$.

  Suppose now that $d^s_G(u_{k-2})=d$, then $c(G')=\ar(G',H)$, and $d^s_G(u'_{k-2})=d$ for every $u'_{k-2}\in U_{k-2}$ by Lemma \ref{LeExtremal} (2). By induction hypothesis, $G'^c$ satisfies (2) or (3) (notice that $n_{k-3}\geq 2$). We denote by $U'_i$, $1\leq i\leq k$, the partite sets of $G'$ as describing in (2)(3). Recall that $n_i=n_{k-2}$ for $1\leq i<k-2$ and $|U'_{k-2}|<n_{k-2}$. This implies that $\bigcup_{i=1}^{k-3}U_i=\bigcup_{i=1}^{k-3}U'_i$, and all the edges incident to $\bigcup_{i=1}^{k-3}U'_i$ have exclusive colors in $G'^c$.

  Since $d_G^s(u_{k-2})=d=d(u_{k-2})-1$, there is exactly one non-exclusive color $a\in C_G(u_{k-2})$, and either $a$ is saturated by $u_{k-2}$ and appears twice at $u_{k-2}$, or $a$ is not saturated by $u_{k-2}$ and appears once at $u_{k-2}$. We now claim that all the edges incident to $\bigcup_{i=1}^{k-3}U_i$ have exclusive colors in $G^c$. Suppose there is a non-exclusive color $a'\in C_G(u_j)$ with $u_j\in U_j$, $1\leq j\leq k-3$. Recall that all edges incident to $u_j$ has an exclusive color in $G'$. This implies that $a'\in C_G(u_{k-2})$ and thus $a'=a$. Let $G''=G-u_j-(U_{k-2}\backslash\{u_{k-2}\})$. Then all edges incident to $(\bigcup_{i=1}^{k-3}U_i\backslash\{u_j\})\cup\{u_{k-2}\}$ have exclusive colors in $G''$. This implies that $G''$ contains a rainbow $H$, a contradiction. Thus as we claimed, all the edges incident to $\bigcup_{i=1}^{k-3}U_i$ have exclusive colors in $G^c$. Specially every edge of color $a$ is incident to a vertex in $U_{k-1}\cup U_k$.

  Since all the edges incident to $\bigcup_{i=1}^{k-3}U_i$ have exclusive colors, we have that $G^c[\bigcup_{i=k-2}^kU_i]$ contains no rainbow $K_3$; for otherwise $G^c$ contains a rainbow $K_k$. If $n_{k-1}=1$, then $G[\bigcup_{i=k-2}^kU_i]$ is a book $B_{n_{k-2}}$ with an extremal coloring for $\ar(B_{n_{k-2}},K_3)$. Thus $G^c$ satisfies (2).

  Now assume that $n_{k-1}\geq 2$, and then $G'^c$ satisfies (3). Since $n_{k-2}>n_{k-1}\geq 2$, $U_{k-2}\backslash\{u_{k-2}\}\neq U'_k$. If $c(u_{k-2}u_k)=c(u_{k-1}u_k)$ where $u_{k-1}\in U_{k-1}$, then $c(u_{k-2}u_k)=a$ and $G^c$ satisfies (3). So assume that $c(u_{k-2}u_k)\neq c(u_{k-1}u_k)$. Since $d_G^s(u_{k-2})=d=d_G(u_{k-2})-1$, there is at least one vertex $u'_{k-1}\in U_{k-1}$ with $c(u_{k-2}u'_{k-1})\notin\{c(u_{k-2}u_k),c(u_{k-1}u_k)\}$. Thus $G[\bigcup_{i=k-2}^kU_i]$ contains a rainbow $K_3$ and $G^c$ contains a rainbow $H$, a contradiction.

  Finally suppose that $d^s(u_{k-2})=d+1=d(u_{k-2})$. Then for every vertex $u_i\in U_i$ with $1\leq i\leq k-2$, $d_G^s(u_i)=d_G(u_i)$. This implies that all the edges incident to $\bigcup_{i=1}^{k-2}U_i$ have exclusive colors in $G^c$, and $G^c$ contains a rainbow $H$, a contradiction.

  \begin{subcase}
    $\ell=k-1$.
  \end{subcase}

  Let $u_{k-1}\in U_{k-1}$ and $G'=G-u_{k-1}$. Since $n_1=n_2=\ldots=n_{k-1}$, we can assume that $u_{k-1}$ has the smallest saturated color degree in $G^c$ among all vertices in $\bigcup_{i=1}^{k-1}U_i$. From Theorem \ref{ThkPartite}, one can compute that $\ar(G,H)-\ar(G',H)=n_{k-1}(\sum_{i=1}^kn_i-n_{k-1})-1=d_G(u_{k-1})-1:=d$. If $d^s_G(u_{k-1})<d-1$, then $c(G')>\ar(G',H)$ and $G'^c$ contains a rainbow $H$, a contradiction. Thus we have that $d^s_G(u_{k-1})\geq d$.

  If $d^s_G(u_{k-1})=d$, then $c(G')=\ar(G,H)$, and $d^s_G(u'_{k-1})=d$ for every $u'_{k-1}\in U_{k-1}$ by Lemma \ref{LeExtremal} (2). By induction hypothesis, $G'^c$ satisfies (2) or (3). We claim that $G'^c$ satisfies (3). Suppose otherwise that $n_{k-2}=n_{k-1}=2$, and $G'^c$ satisfies (2) which is not normal. Let $u'_{k-1}$ be the vertex in $U_{k-1}\backslash\{u_{k-1}\}$. It follows that $d_{G'}^s(u'_{k-1})\leq d-1$, and thus $d_G^s(u'_{k-1})\leq d-1$, a contradiction. Thus we have that $G'^c$ satisfies (3). Since $d_G^s(u_{k-1})=d=d(u_{k-1})-1$, there is exactly one non-exclusive color $a\in C_G(u_{k-1})$, and either $a$ is saturated by $u_{k-1}$ and appears twice at $u_{k-1}$, or $a$ is not saturated by $u_{k-1}$ and appears once at $u_{k-1}$. If $c(u_{k-1}u_k)=c(u'_{k-1}u_k)$, then $c(u_{k-1}u_k)=a$ and $G^c$ satisfies (3). So assume that $c(u_{k-1}u_k)\neq c(u'_{k-1}u_k)$. There is at least one vertex $u_{k-2}\in U_{k-2}$ with the color not in $\{c(u_{k-1}u_k),c(u_{k-2}u_k)\}$. Thus $G^c$ contains a rainbow $H$, a contradiction.

  Now we suppose that $d^s(u_{k-1})=d+1=d(u_{k-2}^1)$. Then for every vertex $u_i\in U_i$ with $1\leq i\leq k-1$, $d_G^s(u_i)=d_G(u_i)$. This implies that all the edges incident to $\bigcup_{i=1}^{k-1}U_i$ have exclusive colors in $G^c$, and $G^c$ contains a rainbow $H$, a contradiction.

  \begin{case}
    $n_k\geq 2$.
  \end{case}

  In this case every partite set has size at least 2. By Lemma \ref{LeExtremal} every two vertices in a common partite sets have the same saturated color degree in $G^c$; and by Lemma \ref{LeSymmetrization} any symmetrization of $c$ is an extremal coloring for $\ar(G,H)$ as well. We first prove the following claim.

  \begin{claim}\label{ClNoS0}
    Every color in $G^c$ is saturated by at least one vertex of $G$.
  \end{claim}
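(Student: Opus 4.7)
I plan to argue by contradiction. Assume a color $a\in S^0(G^c)$ exists; I will construct a coloring $c'$ of $G$ with $c'(G)>c(G)$ and no rainbow $K_k$, contradicting the extremality of $c$. Let $G_a$ be the spanning subgraph of $G$ whose edges are the $a$-colored edges. Since $a$ is not saturated by any vertex, $G_a$ is not a star, so $G_a$ either contains two independent edges or equals $K_3$. By Lemma \ref{LeExtremal}(1), no partite set can saturate $a$ either, so the $a$-edges meet at least three distinct partite sets.

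The basic operation is to pick an $a$-edge $e=u_i^su_j^t$ and replace its color by a brand-new color $a'$; since $G_a$ has at least two edges, the new coloring $c'$ satisfies $c'(G)=c(G)+1$. A rainbow $K_k$ in $c'$ can only arise from a $K_k$ that contained $e$ together with a second $a$-edge $e^\star$ in $c$ and whose remaining $\binom{k}{2}-2$ edges carry pairwise distinct colors. Because each $K_k$ in the $k$-partite graph $G$ selects exactly one vertex from every partite set, a $K_k$ through $e$ and $e^\star\ne e$ exists exactly when $e^\star$ either shares an endpoint with $e$ and extends to a partite set outside $\{U_i,U_j\}$, or lies between two partite sets disjoint from $\{U_i,U_j\}$. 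The plan is to choose $e$ so that every such offending $K_k$ still contains a monochromatic pair outside $\{e,e^\star\}$, rendering $c'$ rainbow-$K_k$-free.

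If $G_a=K_3$, take $e$ to be any triangle edge: the three triangle vertices lie in three fixed partite sets, so any $K_k$ containing two triangle edges must also contain the third triangle vertex and hence the third (still $a$-colored) triangle edge, which provides the required monochromatic pair in $c'$. If $G_a\supseteq 2K_2$, I would split on whether two independent $a$-edges span three or four partite sets. In the three-partite-set subcase, every offending $e^\star$ shares a vertex with $e$, and the triangle-free or triangle-containing structure of $G_a$ lets one choose $e$ so that every offending $e^\star$ either completes a triangle inside $G_a$ with $e$ (whose third edge rescues the monochromatic pair) or cannot coexist with $e$ in any $K_k$ at all.

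The main obstacle is the remaining configuration: $G_a$ contains two independent $a$-edges whose endpoints lie in four disjoint partite sets, so $K_k$'s through both $a$-edges may become rainbow after a single-edge recoloring. To handle this, I would leverage extremality of $c$ more carefully: it forces the $\binom{k}{2}-2$ non-$a$-edges of every obstruction $K_k$ to carry pairwise distinct colors. Using $n_k\geq 2$ (hence $|U_\ell|\geq 2$ for every $\ell$) and Lemma \ref{LeExtremal}(2), I would swap one endpoint of an obstruction $K_k$ with another vertex in its own partite set and track the same-color pair that extremality forces on the new $K_k$. Combining these rigidity constraints with a two-edge recoloring that simultaneously modifies $e$ and a carefully chosen non-$a$-edge of the obstruction $K_k$ finally produces a coloring with strictly more colors and no rainbow $K_k$. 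Verifying the rainbow-freeness of this combined recoloring is the most delicate step.
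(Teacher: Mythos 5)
Your setup is sound and matches the paper's opening move: assume $a\in S^0(G)$, recolor one $a$-edge $e$ with a fresh color, and observe that any new rainbow $K_k$ must contain $e$ together with a second $a$-edge $e^\star$ of the original coloring. But from that point on your argument has a genuine gap. You yourself flag the case of two independent $a$-edges spanning four partite sets as "the main obstacle" and only offer a plan ("swap one endpoint\dots track the same-color pair\dots a two-edge recoloring") without carrying it out; that case is therefore unresolved. Moreover, even your three-partite-set subcase is not actually closed: if the $a$-edges form a path $u_1u_2u_3$ meeting three partite sets, then recoloring \emph{either} edge can create a rainbow $K_k$ (take a $K_k$ through $u_1,u_2,u_3$ whose remaining edges, including $u_1u_3$, are rainbow), so no choice of a single edge $e$ "rescues" the monochromatic pair, and your claim that one can always choose $e$ so that every offending $e^\star$ completes a triangle in $G_a$ or cannot coexist with $e$ in a $K_k$ is false in general.

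The paper escapes this entirely by not trying to exhibit a better coloring via local recoloring. It uses the recoloring of $e$ only to certify that some $e^\star$ with $c(e^\star)=a$ lies in a common $K_k$ with $e$, then splits on whether $e$ and $e^\star$ are adjacent or not, and in each case applies the symmetrization operation (Lemma \ref{LeSymmetrization}, which preserves both the color count of an extremal coloring and rainbow-freeness, by Lemma \ref{LeExtremal}(2)) at every other vertex of the relevant partite sets. Because $a$ is saturated by no vertex, symmetrization propagates the color $a$ onto \emph{all} edges between $U_{i_1}$ and $U_{j_1}\cup U_{j_2}$ (adjacent case) or between $U_{i_1},U_{j_1}$ and between $U_{i_2},U_{j_2}$ (nonadjacent case). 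A direct count then shows the resulting extremal coloring has fewer than $\sum_{1\leq i<j\leq k}n_in_j-n_k(n_{k-1}+n_{k-2}-1)=\ar(G,H)$ colors (here $n_k\geq 2$ is used), a contradiction. To repair your proof you would need to import exactly this global symmetrization-plus-counting mechanism; the purely local recoloring strategy does not suffice.
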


  \begin{proof}
  Suppose that there is a color $a\in S^0(G)$, and let $c(u_{i_1}u_{j_1})=a$ for $u_{i_1}\in U_{i_1},u_{j_1}\in U_{j_1}$. Since $c$ is extremal for $\ar(G,H)$, when we recolor the edge $u_{i_1}u_{j_1}$ with an extra color, then there will be a rainbow $H$, in which there must be an edge $u_{i_2}u_{j_2}$ of color $a$ and $H$ contains all of the vertices $u_{i_1},u_{j_1},u_{i_2},u_{j_2}$.

  If $u_{i_1}u_{j_1}$ and $u_{i_2}u_{j_2}$ are adjacent, say $u_{i_1}=u_{i_2}$, then $u_{j_2}\in U_{j_2}$ with $j_2\neq j_1$. Let $c'$ be the coloring of $G$ obtained from $c$ by doing the symmetrization operation repeatedly, at every vertex in $U_{i_1}\backslash\{u_{i_1}\}$ to $u_{i_1}$. Recall that $a$ is not saturated by $u_{i_1}$. It follows that all edges between $U_{i_1}$ and $\{u_{j_1},u_{j_2}\}$ have color $a$ in $G^{c'}$. This implies that $a$ is not saturated by $u_{j_1}$ and $u_{j_2}$ in $G^{c'}$. Let $c''$ be the coloring of $G$ obtained from $c'$ by doing the symmetrization operation repeatedly, at every vertex in $U_{j_1}\backslash\{u_{j_1}\}$ to $u_{j_1}$, and at every vertex in $U_{j_2}\backslash\{u_{j_2}\}$ to $u_{j_2}$. It follows that $c''$ is an extremal coloring for $\ar(G,H)$, and all edges between $U_{i_1}$ and $U_{j_1}\cup U_{j_2}$ have color $a$ in $G^{c''}$. Thus $c''(G)\leq\sum_{1\leq i<j\leq k}n_in_j-n_{i_1}(n_{j_1}+n_{j_2})+1<\ar(G,H)$, a contradiction.

  Now suppose that $u_{i_1}u_{j_1}$ and $u_{i_2}u_{j_2}$ are nonadjacent. Let $u_{i_2}\in U_{i_2},u_{j_2}\in U_{j_2}$ where $U_{i_1},U_{j_1},U_{i_2},U_{j_2}$ are four different partite sets. Let $c'$ be the coloring of $G$ obtained from $c$ by doing the symmetrization operation repeatedly, at every vertex in $U_{i_1}\backslash\{u_{i_1}\}$ to $u_{i_1}$, at every vertex in $U_{j_1}\backslash\{u_{j_1}\}$ to $u_{j_1}$, at every vertex in $U_{i_2}\backslash\{u_{i_2}\}$ to $u_{i_2}$, and at every vertex in $U_{j_2}\backslash\{u_{j_2}\}$ to $u_{j_2}$. Then $c'$ is an extremal coloring for $\ar(G,H)$. We have that in $G^{c'}$, all edges between $U_{i_1},U_{j_1}$ and between $U_{i_2},U_{j_2}$, have the same color $a$. Thus $c'(G)\leq\sum_{1\leq i<j\leq k}n_in_j-n_{i_1}n_{j_1}-n_{i_2}n_{j_2}+1<\ar(G,H)$, a contradiction.
  \end{proof}

  \begin{claim}\label{ClTwoSets}
    If $a\in S^1(G)$ is saturated by $u_{i_1}\in U_{i_1}$, then there are two partite sets $U_{j_1},U_{j_2}$ such that all the edges of color $a$ are those between $u_{i_1}$ and $U_{j_1}\cup U_{j_2}$.
  \end{claim}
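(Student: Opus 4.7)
The plan is to argue by contradiction, extending the symmetrization strategy used in Claim~\ref{ClNoS0}. Suppose $a$ is saturated by $u_{i_1}$ but the $a$-colored edges reach vertices lying in at least three distinct partite sets $U_{j_1},U_{j_2},U_{j_3}$; pick representatives $u_{j_t}\in U_{j_t}$ with $c(u_{i_1}u_{j_t})=a$ for $t=1,2,3$.

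I would iteratively apply the symmetrization operation: first at every vertex of $U_{j_t}\setminus\{u_{j_t}\}$ to $u_{j_t}$ for $t=1,2,3$, and then at every vertex of $U_{i_1}\setminus\{u_{i_1}\}$ to $u_{i_1}$. By Lemma~\ref{LeSymmetrization}(1)(3) combined with Lemma~\ref{LeExtremal}(2), every step preserves the total number of colors, and by Lemma~\ref{LeSymmetrization}(2) no rainbow $K_k$ is created, so the resulting coloring $c'$ is still extremal for $\ar(G,H)$. Since $a\notin S_G(u_{j_t})$ for each $t$, the first batch spreads color $a$ to every edge from $u_{i_1}$ into $U_{j_1}\cup U_{j_2}\cup U_{j_3}$, while $a$ remains a color saturated by $u_{i_1}$. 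In the second batch, each vertex $u'\in U_{i_1}\setminus\{u_{i_1}\}$ consequently acquires a single fresh color $\sigma_{u'}a$ appearing on all of its edges to $U_{j_1}\cup U_{j_2}\cup U_{j_3}$. Thus in $c'$ the $n_{i_1}(n_{j_1}+n_{j_2}+n_{j_3})$ edges between $U_{i_1}$ and $U_{j_1}\cup U_{j_2}\cup U_{j_3}$ use only $n_{i_1}$ colors, giving
\[
c'(G)\;\leq\;e(G)-n_{i_1}(n_{j_1}+n_{j_2}+n_{j_3}-1).
\]

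Combining this with $c'(G)=\ar(G,H)=e(G)-n_k(n_{k-1}+n_{k-2}-1)$ from Theorem~\ref{ThkPartite} yields $n_{i_1}(n_{j_1}+n_{j_2}+n_{j_3}-1)\leq n_k(n_{k-1}+n_{k-2}-1)$. But $n_{i_1}\geq n_k$ and, because $j_1,j_2,j_3$ are three distinct indices different from $i_1$ (using $n_{k-3}\geq n_{k-2}$ when $i_1\in\{k-2,k-1,k\}$), one has $n_{j_1}+n_{j_2}+n_{j_3}\geq n_{k-2}+n_{k-1}+n_k$. Consequently the left-hand side is at least $n_k(n_{k-1}+n_{k-2}-1)+n_k^2$, which strictly exceeds the right-hand side since $n_k\geq 2$ in Case~2, a contradiction. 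The main obstacle is the careful bookkeeping through the two rounds of symmetrization: one must verify that $a$ persists as a color saturated by $u_{i_1}$ after the first batch, so that the second batch indeed introduces exactly one fresh color per vertex of $U_{i_1}\setminus\{u_{i_1}\}$, appearing uniformly on every edge from that vertex into $U_{j_1}\cup U_{j_2}\cup U_{j_3}$.
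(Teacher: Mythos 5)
Your symmetrization argument for ruling out three (or more) partite sets is correct and is essentially the paper's own argument: the paper performs the same batch of symmetrizations (only in the opposite order, $U_{i_1}$ first and then the $U_{j_\ell}$; both orders work) and derives the same contradiction $n_{i_1}(n_{j_1}+n_{j_2}+n_{j_3}-1)>n_k(n_{k-1}+n_{k-2}-1)$. However, this only shows that the edges of color $a$ meet at most two partite sets besides $U_{i_1}$, which is strictly weaker than the claim. The claim asserts a set equality: the edges of color $a$ are \emph{exactly} the edges between $u_{i_1}$ and $U_{j_1}\cup U_{j_2}$, and this stronger form is what is used downstream (in Claim \ref{ClCommon} and in the final count $n_\ell(n_{j_1}+n_{j_2}-1)$). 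Two pieces are missing from your proposal.

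First, you must rule out that all edges of color $a$ run into a single partite set $U_{j_1}$: in that case $a$ would be saturated by the set $U_{j_1}$ but, since $a\in S^1(G)$ is saturated only by $u_{i_1}\notin U_{j_1}$, by no vertex of $U_{j_1}$, contradicting Lemma \ref{LeExtremal} (1). Second, and more substantially, you must show that \emph{every} edge between $u_{i_1}$ and $U_{j_1}\cup U_{j_2}$ has color $a$, not merely that the $a$-colored edges are contained among them. The paper handles this with a further symmetrization: if some $u'_{j_2}\in U_{j_2}$ satisfies $c(u_{i_1}u'_{j_2})\neq a$, symmetrize every vertex of $U_{j_1}\setminus\{u_{j_1}\}$ to $u_{j_1}$ and every vertex of $U_{j_2}\setminus\{u'_{j_2}\}$ to $u'_{j_2}$; in the resulting extremal coloring all edges of color $a$ lie between $u_{i_1}$ and $U_{j_1}$, and since $n_{j_1}\geq 2$ the color $a$ is saturated by the set $U_{j_1}$ but by no vertex of it, again contradicting Lemma \ref{LeExtremal} (1). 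Without these two steps the claim as stated is not established.
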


  \begin{proof}
  Since $a$ is saturated by $u_{i_1}$, all edges of color $a$ are incident to $u_{i_1}$. If all edges of color $a$ are between $a$ and a partite sets $U_{j_1}$, then $a$ is saturated by a vertex in $U_{j_1}$ by Lemma \ref{LeExtremal} (1), contradicting $a\in S^1(G)$. Suppose now that there are three edges $u_iu_{j_1},u_iu_{j_2},u_iu_{j_3}$ of color $a$, where $u_{j_1}\in U_{j_1},u_{j_2}\in U_{j_2},u_{j_3}\in U_{j_3}$ are in different partite sets. Let $c'$ be the coloring of $G$ obtained from $c$ by doing the symmetrization operation repeatedly, at every vertex in $U_{i_1}\backslash\{u_{i_1}\}$ to $u_{i_1}$, and at every vertex in $U_{j_\ell}\backslash\{u_{j_\ell}\}$ to $u_{j_\ell}$, $\ell=1,2,3$. Then $c'$ is an extremal coloring for $\ar(G,H)$. However $c'(G)\leq\sum_{1\leq i<j\leq k}n_in_j-n_i(n_{j_1}+n_{j_2}+n_{j_3}-1)<\ar(G,H)$, a contradiction. Thus we conclude that there are two partite sets $U_{j_1},U_{j_2}$ such that all edges of color $a$ are between $u_{i_1}$ and $U_{j_1}\cup U_{j_2}$.

  Let $c(u_{i_1}u_{j_1})=c(u_{i_1}u_{j_2})=a$ with $u_{j_1}\in U_{j_1},u_{j_2}\in U_{j_2}$. Suppose that there is a vertex $u'_{j_2}\in U_{j_2}$ such that $c(u_{i_1}u'_{j_2})\neq a$. Let $c'$ be the coloring of $G$ obtained from $c$ by doing the symmetrization operation repeatedly, at every vertex in $U_{j_1}\backslash\{u_{j_1}\}$ to $u_{j_1}$, and at every vertex in $U_{j_2}\backslash\{u'_{j_2}\}$ to $u'_{j_2}$. Then $c'$ is an extremal coloring for $\ar(G,H)$, all edges between $u_{i_1}$ and $U_{j_1}$ have color $a$ and all edges between $u_{i_1}$ and $U_{j_2}$ have color other than $a$. Since $n_{j_1}\geq 2$, $a$ is saturated by $U_{j_1}$ but not saturated by any vertex in $U_{j_1}$, contradicting Lemma \ref{LeExtremal} (1). Thus we conclude that all edges between $u_{i_1}$ and $U_{j_1}\cup U_{j_2}$ have color $a$.
  \end{proof}

  \begin{claim}\label{ClCommon}
    All the vertices that saturate some colors in $S^1(G)$ are in a common partite sets.
  \end{claim}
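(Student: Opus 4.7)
My plan is to argue by contradiction, first reducing to a totally symmetric extremal coloring, then translating to the coloring of $K=K_k$ via Theorem \ref{ThUnbalanced}, and finally recoloring one edge at the $K$-level to produce a non-rainbow coloring of $K$ whose blow-up has strictly more colors than $c$.

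Suppose for contradiction that $u_i\in U_i$ saturates $a\in S^1(G)$ and $u_j\in U_j$ saturates $b\in S^1(G)$, with $i\ne j$. I would first symmetrize (as in the proof of Lemma \ref{LeTotaly}) each non-anchor vertex in each partite set toward a chosen anchor, taking $u_i$ and $u_j$ themselves as the anchors in $U_i$ and $U_j$. Because symmetrization at $u_i'\in U_i\setminus\{u_i\}$ toward $u_i$ leaves every edge at $u_i$ (and hence the color $a$) untouched, while introducing a fresh parallel color $\sigma a\in S^1$ saturated by $u_i'$, the color $a\in S^1(G^{c'})$ survives in the output; the symmetric statement holds at $u_j$. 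So in the resulting totally symmetric extremal coloring $c'$, both $u_i$ and $u_j$ still saturate colors in $S^1(G^{c'})$.

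By (the proof of) Theorem \ref{ThUnbalanced}, $G^{c'}=\mathcal{B}(K^{c'},f)$ for some coloring $c'$ of $K=K_k$ with no rainbow $K_k$, where $v_m\leftrightarrow U_m$. The blow-up dictionary, together with Claim \ref{ClTwoSets}, then delivers distinct colors $a^*,b^*\in S^1(K^{c'})$ each appearing on exactly two edges, saturated by $v_i$ and $v_j$ respectively. I define $c''$ on $K$ by recoloring one edge of color $a^*$, say $v_iv_{x_1}$ (leaving $v_iv_{x_2}$ as the remaining $a^*$-edge), with a brand new color $a'$. Since $b^*$ is still present on two untouched edges incident to $v_j$, $K^{c''}$ has no rainbow $K_k$. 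In $K^{c''}$ both $a^*$ and $a'$ are exclusive, so by the blow-up rules they contribute $n_i n_{x_1}+n_i n_{x_2}$ colors to $\mathcal{B}(K^{c''},f)$ in place of the $n_i$ colors contributed by $a^*\in S^1(K^{c'})$ to $\mathcal{B}(K^{c'},f)$; the net gain is $n_i(n_{x_1}+n_{x_2}-1)\ge 1$. Hence $c(\mathcal{B}(K^{c''},f))>\ar(G,H)$, contradicting Theorem \ref{ThUnbalanced}.

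The step I expect to be most delicate is the reduction to a totally symmetric coloring while preserving the two-partite-set hypothesis. The point is that by always symmetrizing \emph{toward} the fixed anchors $u_i,u_j$, their incident edges are never recolored and so $a,b$ literally persist in $S^1$; Lemma \ref{LeSymmetrization}(3) further ensures that symmetrizations within one partite set do not destroy the symmetries previously achieved elsewhere. Once totally symmetric is in hand, the rest is a short computation from the blow-up formulas of Section 2.
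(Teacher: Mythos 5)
Your proof is correct, but it takes a genuinely different route from the paper's. The paper stays entirely at the level of $G$: it applies Claim \ref{ClTwoSets} to both saturating vertices, observes that (after a harmless relabelling) the star at $u_{i_2}$ is edge-disjoint from the edges between $U_{i_1}$ and $U_{j_1}\cup U_{j'_1}$, symmetrizes only the one partite set $U_{i_1}$ onto $u_{i_1}$, and then counts colors directly against the explicit value $\sum n_in_j-n_k(n_{k-1}+n_{k-2}-1)$ from Theorem \ref{ThkPartite}; this requires both a small case analysis to secure edge-disjointness and the numerical verification that the two losses together exceed $n_k(n_{k-1}+n_{k-2}-1)$. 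You instead pass to a fully symmetric coloring, descend to the base graph $K_k$ via Theorem \ref{ThUnbalanced}, and split one doubled color by recoloring a single edge of $K_k$; since the other doubled color still repeats, the base coloring of $K_k$ remains non-rainbow, and the blow-up count visibly increases, contradicting the maximum in Theorem \ref{ThUnbalanced}. Your version buys a cleaner finish --- the gain $n_i(n_{x_1}+n_{x_2}-1)>0$ is immediate, with no comparison to the closed formula and no edge-disjointness cases --- at the cost of the heavier reduction: you must run the totally-symmetric machinery with $u_i,u_j$ as anchors and check that $a$ and $b$ survive in $S^1$ throughout. You correctly flag this as the delicate step, and your justification is sound: symmetrizing non-anchors toward the anchors never recolors an edge at $u_i$ except to reassign it the color $c(u_iw)$ of the corresponding anchor edge (note $a\notin S_G(w)$ for any $w\neq u_i$ since $a\in S^1(G)$), and by Claim \ref{ClTwoSets} the color class of $a$ has at least four edges, so it cannot degenerate into an exclusive color. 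Both arguments rely on Claim \ref{ClTwoSets} and on Lemmas \ref{LeSymmetrization} and \ref{LeExtremal} to keep the modified colorings extremal.
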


  \begin{proof}
  Suppose that $u_{i_1}\in U_{i_1}$ saturates $a_1\in S^1(G)$, $u_{i_2}\in U_{i_2}$ saturates $a_2\in S^1(G)$, where $U_{i_1}\neq U_{i_2}$. Clearly $a_1\neq a_2$. By Claim \ref{ClTwoSets}, there are partite sets $U_{j_1},U_{j'_1},U_{j_2},U_{j'_2}$, where $j_1\neq j'_1, j_2\neq j'_2$, such that all edges between $u_{i_1}$ and $U_{j_1}\cup U_{j'_1}$ have color $a_1$, and all edges between $u_{i_2}$ and $U_{j_2}\cup U_{j'_2}$ have color $a_2$.

  Either $c(u_{i_1}u_{i_2})\neq a_1$ or $c(u_{i_1}u_{i_2})\neq a_2$. This implies that either $u_{i_1}\notin U_{j_2}\cup U_{j'_2}$ or $u_{i_2}\notin U_{j_1}\cup U_{j'_1}$. Assume without loss of generality that $u_{i_2}\notin U_{j_1}\cup U_{j'_1}$, i.e., $i_2\neq j_1,j'_1$. Let $c'$ be the coloring of $G$ obtained from $c$ by doing the symmetrization operation repeatedly, at every vertex in $U_{i_1}\backslash\{u_{i_1}\}$ to $u_{i_1}$. Now $c'$ is an extremal coloring for $\ar(G,H)$, and for every vertex $u'_{i_1}\in U_{i_1}$, all edges between $u'_{i_1}$ and $U_{j_1}\cup U_{j'_1}$ have the same color. Recall that all edges between $u_{i_2}$ and $U_{j_2}\cup U_{j'_2}$ have the same color. We have that $c'(G)\leq\sum_{1\leq i<j\leq k}n_in_j-n_{i_1}(n_{j_1}+n_{j'_1}-1)-(n_{j_2}+n_{j'_2}-1)<\ar(G,H)$, a contradiction.
  \end{proof}

  By Claim \ref{ClNoS0}, $S^0(G)=\emptyset$. If $S^1(G)=\emptyset$, then $G$ is rainbow, a contradiction. So $S^1(G)\neq\emptyset$. By Claim \ref{ClCommon}, let $U_\ell$ be the partite set that contains all vertices saturating some colors in $S^1(G)$.

  \begin{claim}
    Every vertex in $U_\ell$ saturates exactly one color in $S^1(G)$.
  \end{claim}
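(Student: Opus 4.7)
The plan is to argue by contradiction: I will assume some vertex $u\in U_\ell$ saturates $p\ge 2$ colors in $S^1(G)$ and show that the resulting total savings $e(G)-c(G)$ strictly exceed $n_k(n_{k-1}+n_{k-2}-1)$, contradicting $c(G)=\ar(G,K_k)$ (which equals $e(G)-n_k(n_{k-1}+n_{k-2}-1)$ by Theorem~\ref{ThkPartite}).

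First I would reduce to a uniform configuration by symmetrization. Applying Lemma~\ref{LeSymmetrization} repeatedly at every vertex of $U_\ell\setminus\{u\}$ to $u$ (as in the proof of Lemma~\ref{LeTotaly}), I obtain an extremal coloring $c'$ in which the vertices of $U_\ell$ are pairwise symmetric in $G^{c'}$. A short verification shows that the symmetry bijection sends each $S^1$-color saturated at $u$ to an $S^1$-color saturated at $u'$: such a color appears on $\ge 2$ edges all incident only to $u$, and its image appears on the corresponding $\ge 2$ edges incident only to $u'$. Hence every vertex of $U_\ell$ saturates exactly $p$ colors of $S^1$, and by Claim~\ref{ClTwoSets} these $p$ colors at each vertex use a common collection of $p$ pairs of partite sets $(U_{j_i},U_{j'_i})$, $i=1,\ldots,p$, independent of the chosen vertex of $U_\ell$.

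Next I would argue that these $p$ pairs are pairwise disjoint as sets of indices: if $j_1=j_2$ say, every edge from $u$ to $U_{j_1}$ would have to carry both distinct colors $a_1,a_2$. Hence $\{j_1,j'_1,\ldots,j_p,j'_p\}\subseteq\{1,\ldots,k\}\setminus\{\ell\}$ has $2p$ elements, forcing $k\ge 2p+1\ge 5$ (for smaller $k$ the claim is automatic). By Claim~\ref{ClNoS0} every non-exclusive color lies in $S^1$, and by Claim~\ref{ClTwoSets} the $i$-th $S^1$-color at any $u\in U_\ell$ covers $n_{j_i}+n_{j'_i}$ edges; summing over all $n_\ell$ vertices of $U_\ell$,
\[
e(G)-c'(G)=n_\ell\sum_{i=1}^p(n_{j_i}+n_{j'_i}-1).
\]

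Finally, for $p=2$ the four distinct indices $j_1,j'_1,j_2,j'_2\ne\ell$ satisfy $\sum_{i=1}^2(n_{j_i}+n_{j'_i})\ge n_k+n_{k-1}+n_{k-2}+n_{k-3}$; this is a short case check on whether $\ell\le k-4$ or $\ell\in\{k-3,k-2,k-1,k\}$, using $n_1\ge\cdots\ge n_k$ and in particular $n_{k-4}\ge n_{k-3}$ in the latter case. Combined with $n_\ell\ge n_k$ this gives savings at least $n_k(n_k+n_{k-1}+n_{k-2}+n_{k-3}-2)$, which exceeds the target $n_k(n_{k-1}+n_{k-2}-1)$ by $n_k(n_k+n_{k-3}-1)\ge 3n_k>0$ since $n_k\ge 2$ (Case 2) and $n_{k-3}\ge n_k\ge 2$. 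This contradicts $c'(G)=\ar(G,K_k)$, so $p=1$. The main obstacle is the symmetrization step: checking carefully that iterating Lemma~\ref{LeSymmetrization} inside $U_\ell$ preserves each color's membership in $S^1$ and transfers the Claim~\ref{ClTwoSets} pair structure cleanly between vertices, so that the savings really factor as $n_\ell\cdot\sum_i(n_{j_i}+n_{j'_i}-1)$ and the elementary inequality finishes the argument.
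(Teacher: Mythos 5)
Your argument for the ``at most one'' direction is essentially the paper's: symmetrize every vertex of $U_\ell\setminus\{u\}$ to $u$, use Claim~\ref{ClTwoSets} to see that the two (or $p$) saturated colors occupy all edges from $u$ to pairwise disjoint pairs of partite sets, and count that the resulting color deficit, at least $n_\ell(n_{j_1}+n_{j'_1}+n_{j_2}+n_{j'_2}-2)\ge n_k(n_k+n_{k-1}+n_{k-2}+n_{k-3}-2)$, exceeds the permitted deficit $n_k(n_{k-1}+n_{k-2}-1)$ from Theorem~\ref{ThkPartite}. Your extra verifications (disjointness of the pairs, transport of the pair structure under symmetrization) are correct and are left implicit in the paper; the inequality check is also sound.

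However, the claim says every vertex of $U_\ell$ saturates \emph{exactly} one color of $S^1(G)$, and your proposal only excludes $p\ge 2$; the case $p=0$ is never addressed. This is not vacuous: $U_\ell$ is defined only as the partite set \emph{containing} all vertices that saturate some $S^1$-color, so a priori some of its vertices might saturate none. The paper disposes of this case as follows: if $u_\ell\in U_\ell$ saturates no color, then (since $S^0(G)=\emptyset$ by Claim~\ref{ClNoS0} and every $S^1$-color is saturated by a vertex of $U_\ell$ by Claim~\ref{ClCommon}) every edge at $u_\ell$ carries an exclusive color, whence $d^s_G(u_\ell)=d_G(u_\ell)$; but some vertex of $U_\ell$ does saturate an $S^1$-color, which covers at least two of its incident edges, so that vertex has $d^s<d$, contradicting Lemma~\ref{LeExtremal}(2) (saturated color degree is constant on a partite set). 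This half of the claim is genuinely needed downstream: the final computation $c'(G)=\sum_{1\leq i<j\leq k}n_in_j-n_\ell(n_{j_1}+n_{j_2}-1)$ requires that \emph{every} vertex of $U_\ell$ contribute exactly one non-exclusive color. So the omission is a real gap, though an easily repaired one.
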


  \begin{proof}
  Let $u_\ell\in U_\ell$ be arbitrary. If $u_\ell$ is not saturated by any color, then every edge incident to $u_\ell$ has an exclusive color. This implies that $d_G^s(u_\ell)=d_G(u_\ell)$, contradicting Lemma \ref{LeExtremal} (2). Suppose now that $u_\ell$ saturates two colors $a_1,a_2\in S^1(G)$. By Claim \ref{ClTwoSets}, there are partite sets $U_{j_1},U_{j'_1},U_{j_2},U_{j'_2}$ such that all edges between $u_\ell$ and $U_{j_1}\cup U_{j'_1}$ have color $a_1$ and all edges between $u_\ell$ and $U_{j_2}\cup U_{j'_2}$ have color $a_2$. Let $c'$ be the coloring of $G$ obtained from $c$ by doing the symmetrization operation repeatedly, at every vertex in $U_\ell\backslash\{u_\ell\}$ to $u_\ell$. Now $c'$ is an extremal coloring for $\ar(G,H)$, and $c'(G)\leq\sum_{1\leq i<j\leq k}n_in_j-n_\ell(n_{j_1}+n_{j'_1}+n_{j_2}+n_{j'_2}-2)<\ar(G,H)$, a contradiction.
  \end{proof}

  Now let $u_\ell\in U_\ell$ saturate $a\in S^1(G)$, and $U_{j_1},U_{j_2}$ be two partite sets such that all edges between $u_\ell$ and $U_{j_1}\cup U_{j_2}$ have color $a$. Let $c'$ be an edge-coloring of $G$ obtained from $c$ by doing the symmetrization operation repeatedly, at every vertex in $U_\ell\backslash\{u_\ell\}$ to $u_\ell$. Now $c'$ is an extremal coloring for $\ar(G,H)$, and $c'(G)=\sum_{1\leq i<j\leq k}n_in_j-n_\ell(n_{j_1}+n_{j_2}-1)=\ar(G,H)$. This implies that $n_\ell(n_{j_1}+n_{j_2}-1)=n_k(n_{k-1}+n_{k-2}-1)$. One can compute that $n_\ell=n_k$, and $n_{j_1}+n_{j_2}=n_{k-1}+n_{k-2}$.

  By possibly reordering the partite sets of the same size, we can see that $G^c$ satisfies (3). The proof is complete.
\end{proof}

\subsection{For $K_k$ in balanced $r$-partite graphs}

  Now we consider the case $G=K_r^t$. If $r=k$, then the extremal colorings for $\ar(G,H)$ are described in Theorem \ref{ThExkpartite}. So we assume that $r>k$. If $t=1$, then $G=K_r$. We recall that the Tur\'an coloring of $K_r$ (that obtained from $T_{r,k-2}$) is an extremal coloring for $\ar(K_r,K_k)$. However, it seems not easy to describe all extremal colorings for $\ar(K_r,K_k)$. In fact there are 23 extremal colorings for $\ar(K_5,K_4)$ (see Figure 2, where each black edge assigns an exclusive color). In the following we will deal with the case $t\geq 2$.

\begin{center}
\setlength{\unitlength}{0.8pt}
\begin{picture}(575,365)
\put(0,0){\line(1,0){575}} \put(0,0){\line(0,1){365}} \put(0,365){\line(1,0){575}} \put(575,0){\line(0,1){365}}

\thicklines
\put(40,280){\put(15,0){\color{red}\line(1,0){50}} \put(15,0){\line(-1,3){15}} \put(65,0){\line(1,3){15}} \put(15,0){\line(1,3){25}} \put(65,0){\line(-1,3){25}} \put(15,0){\line(13,9){65}} \put(65,0){\line(-13,9){65}} \put(0,45){\color{red}\line(1,0){80}} \put(0,45){\color{red}\line(4,3){40}} \put(80,45){\color{red}\line(-4,3){40}}
\put(15,0){\circle*{4}} \put(65,0){\circle*{4}} \put(0,45){\circle*{4}} \put(80,45){\circle*{4}} \put(40,75){\circle*{4}} }

\put(140,280){\put(15,0){\color{red}\line(1,0){50}} \put(15,0){\color{red}\line(-1,3){15}} \put(65,0){\color{red}\line(1,3){15}} \put(15,0){\line(1,3){25}} \put(65,0){\line(-1,3){25}} \put(15,0){\line(13,9){65}} \put(65,0){\line(-13,9){65}} \put(0,45){\color{red}\line(1,0){80}} \put(0,45){\line(4,3){40}} \put(80,45){\line(-4,3){40}}
\put(15,0){\circle*{4}} \put(65,0){\circle*{4}} \put(0,45){\circle*{4}} \put(80,45){\circle*{4}} \put(40,75){\circle*{4}} }

\put(240,280){\put(15,0){\line(1,0){50}} \put(15,0){\color{red}\line(-1,3){15}} \put(65,0){\color{red}\line(1,3){15}} \put(15,0){\line(1,3){25}} \put(65,0){\line(-1,3){25}} \put(15,0){\line(13,9){65}} \put(65,0){\line(-13,9){65}} \put(0,45){\line(1,0){80}} \put(0,45){\color{red}\line(4,3){40}} \put(80,45){\color{red}\line(-4,3){40}}
\put(15,0){\circle*{4}} \put(65,0){\circle*{4}} \put(0,45){\circle*{4}} \put(80,45){\circle*{4}} \put(40,75){\circle*{4}} }

\put(340,280){\put(15,0){\color{red}\line(1,0){50}} \put(15,0){\color{red}\line(-1,3){15}} \put(65,0){\color{red}\line(1,3){15}} \put(15,0){\line(1,3){25}} \put(65,0){\line(-1,3){25}} \put(15,0){\line(13,9){65}} \put(65,0){\line(-13,9){65}} \put(0,45){\line(1,0){80}} \put(0,45){\color{blue}\line(4,3){40}} \put(80,45){\color{blue}\line(-4,3){40}}
\put(15,0){\circle*{4}} \put(65,0){\circle*{4}} \put(0,45){\circle*{4}} \put(80,45){\circle*{4}} \put(40,75){\circle*{4}} }

\put(440,280){\put(15,0){\color{red}\line(1,0){50}} \put(15,0){\color{red}\line(-1,3){15}} \put(65,0){\color{red}\line(1,3){15}} \put(15,0){\line(1,3){25}} \put(65,0){\line(-1,3){25}} \put(15,0){\line(13,9){65}} \put(65,0){\line(-13,9){65}} \put(0,45){\color{blue}\line(1,0){80}} \put(0,45){\color{blue}\line(4,3){40}} \put(80,45){\line(-4,3){40}}
\put(15,0){\circle*{4}} \put(65,0){\circle*{4}} \put(0,45){\circle*{4}} \put(80,45){\circle*{4}} \put(40,75){\circle*{4}} }

\put(10,190){\put(15,0){\color{red}\line(1,0){50}} \put(15,0){\color{red}\line(-1,3){15}} \put(65,0){\line(1,3){15}} \put(15,0){\line(1,3){25}} \put(65,0){\line(-1,3){25}} \put(15,0){\color{red}\line(13,9){65}} \put(65,0){\line(-13,9){65}} \put(0,45){\line(1,0){80}} \put(0,45){\color{blue}\line(4,3){40}} \put(80,45){\color{blue}\line(-4,3){40}}
\put(15,0){\circle*{4}} \put(65,0){\circle*{4}} \put(0,45){\circle*{4}} \put(80,45){\circle*{4}} \put(40,75){\circle*{4}} }

\put(105,190){\put(15,0){\color{red}\line(1,0){50}} \put(15,0){\color{red}\line(-1,3){15}} \put(65,0){\color{blue}\line(1,3){15}} \put(15,0){\line(1,3){25}} \put(65,0){\line(-1,3){25}} \put(15,0){\color{red}\line(13,9){65}} \put(65,0){\line(-13,9){65}} \put(0,45){\color{blue}\line(1,0){80}} \put(0,45){\line(4,3){40}} \put(80,45){\line(-4,3){40}}
\put(15,0){\circle*{4}} \put(65,0){\circle*{4}} \put(0,45){\circle*{4}} \put(80,45){\circle*{4}} \put(40,75){\circle*{4}} }

\put(200,190){\put(15,0){\color{red}\line(1,0){50}} \put(15,0){\color{red}\line(-1,3){15}} \put(65,0){\color{blue}\line(1,3){15}} \put(15,0){\line(1,3){25}} \put(65,0){\line(-1,3){25}} \put(15,0){\color{red}\line(13,9){65}} \put(65,0){\line(-13,9){65}} \put(0,45){\line(1,0){80}} \put(0,45){\line(4,3){40}} \put(80,45){\color{blue}\line(-4,3){40}}
\put(15,0){\circle*{4}} \put(65,0){\circle*{4}} \put(0,45){\circle*{4}} \put(80,45){\circle*{4}} \put(40,75){\circle*{4}} }

\put(295,190){\put(15,0){\color{red}\line(1,0){50}} \put(15,0){\color{red}\line(-1,3){15}} \put(65,0){\color{blue}\line(1,3){15}} \put(15,0){\line(1,3){25}} \put(65,0){\line(-1,3){25}} \put(15,0){\color{red}\line(13,9){65}} \put(65,0){\line(-13,9){65}} \put(0,45){\line(1,0){80}} \put(0,45){\color{blue}\line(4,3){40}} \put(80,45){\line(-4,3){40}}
\put(15,0){\circle*{4}} \put(65,0){\circle*{4}} \put(0,45){\circle*{4}} \put(80,45){\circle*{4}} \put(40,75){\circle*{4}} }

\put(390,190){\put(15,0){\color{red}\line(1,0){50}} \put(15,0){\color{blue}\line(-1,3){15}} \put(65,0){\line(1,3){15}} \put(15,0){\line(1,3){25}} \put(65,0){\line(-1,3){25}} \put(15,0){\line(13,9){65}} \put(65,0){\line(-13,9){65}} \put(0,45){\color{blue}\line(1,0){80}} \put(0,45){\color{red}\line(4,3){40}} \put(80,45){\color{red}\line(-4,3){40}}
\put(15,0){\circle*{4}} \put(65,0){\circle*{4}} \put(0,45){\circle*{4}} \put(80,45){\circle*{4}} \put(40,75){\circle*{4}} }

\put(485,190){\put(15,0){\color{red}\line(1,0){50}} \put(15,0){\color{blue}\line(-1,3){15}} \put(65,0){\line(1,3){15}} \put(15,0){\line(1,3){25}} \put(65,0){\line(-1,3){25}} \put(15,0){\line(13,9){65}} \put(65,0){\color{blue}\line(-13,9){65}} \put(0,45){\line(1,0){80}} \put(0,45){\color{red}\line(4,3){40}} \put(80,45){\color{red}\line(-4,3){40}}
\put(15,0){\circle*{4}} \put(65,0){\circle*{4}} \put(0,45){\circle*{4}} \put(80,45){\circle*{4}} \put(40,75){\circle*{4}} }

\put(10,100){\put(15,0){\color{red}\line(1,0){50}} \put(15,0){\color{blue}\line(-1,3){15}} \put(65,0){\line(1,3){15}} \put(15,0){\line(1,3){25}} \put(65,0){\line(-1,3){25}} \put(15,0){\color{blue}\line(13,9){65}} \put(65,0){\line(-13,9){65}} \put(0,45){\line(1,0){80}} \put(0,45){\color{red}\line(4,3){40}} \put(80,45){\color{red}\line(-4,3){40}}
\put(15,0){\circle*{4}} \put(65,0){\circle*{4}} \put(0,45){\circle*{4}} \put(80,45){\circle*{4}} \put(40,75){\circle*{4}} }

\put(105,100){\put(15,0){\color{red}\line(1,0){50}} \put(15,0){\color{blue}\line(-1,3){15}} \put(65,0){\color{blue}\line(1,3){15}} \put(15,0){\line(1,3){25}} \put(65,0){\line(-1,3){25}} \put(15,0){\line(13,9){65}} \put(65,0){\line(-13,9){65}} \put(0,45){\line(1,0){80}} \put(0,45){\color{red}\line(4,3){40}} \put(80,45){\color{red}\line(-4,3){40}}
\put(15,0){\circle*{4}} \put(65,0){\circle*{4}} \put(0,45){\circle*{4}} \put(80,45){\circle*{4}} \put(40,75){\circle*{4}} }

\put(200,100){\put(15,0){\color{green}\line(1,0){50}} \put(15,0){\line(-1,3){15}} \put(65,0){\line(1,3){15}} \put(15,0){\color{blue}\line(1,3){25}} \put(65,0){\color{blue}\line(-1,3){25}} \put(15,0){\line(13,9){65}} \put(65,0){\line(-13,9){65}} \put(0,45){\color{green}\line(1,0){80}} \put(0,45){\color{red}\line(4,3){40}} \put(80,45){\color{red}\line(-4,3){40}}
\put(15,0){\circle*{4}} \put(65,0){\circle*{4}} \put(0,45){\circle*{4}} \put(80,45){\circle*{4}} \put(40,75){\circle*{4}} }

\put(295,100){\put(15,0){\line(1,0){50}} \put(15,0){\color{green}\line(-1,3){15}} \put(65,0){\color{green}\line(1,3){15}} \put(15,0){\color{blue}\line(1,3){25}} \put(65,0){\color{blue}\line(-1,3){25}} \put(15,0){\line(13,9){65}} \put(65,0){\line(-13,9){65}} \put(0,45){\line(1,0){80}} \put(0,45){\color{red}\line(4,3){40}} \put(80,45){\color{red}\line(-4,3){40}}
\put(15,0){\circle*{4}} \put(65,0){\circle*{4}} \put(0,45){\circle*{4}} \put(80,45){\circle*{4}} \put(40,75){\circle*{4}} }

\put(390,100){\put(15,0){\line(1,0){50}} \put(15,0){\color{green}\line(-1,3){15}} \put(65,0){\line(1,3){15}} \put(15,0){\color{blue}\line(1,3){25}} \put(65,0){\color{blue}\line(-1,3){25}} \put(15,0){\color{green}\line(13,9){65}} \put(65,0){\line(-13,9){65}} \put(0,45){\line(1,0){80}} \put(0,45){\color{red}\line(4,3){40}} \put(80,45){\color{red}\line(-4,3){40}}
\put(15,0){\circle*{4}} \put(65,0){\circle*{4}} \put(0,45){\circle*{4}} \put(80,45){\circle*{4}} \put(40,75){\circle*{4}} }

\put(485,100){\put(15,0){\color{green}\line(1,0){50}} \put(15,0){\color{green}\line(-1,3){15}} \put(65,0){\line(1,3){15}} \put(15,0){\color{blue}\line(1,3){25}} \put(65,0){\color{blue}\line(-1,3){25}} \put(15,0){\line(13,9){65}} \put(65,0){\line(-13,9){65}} \put(0,45){\line(1,0){80}} \put(0,45){\color{red}\line(4,3){40}} \put(80,45){\color{red}\line(-4,3){40}}
\put(15,0){\circle*{4}} \put(65,0){\circle*{4}} \put(0,45){\circle*{4}} \put(80,45){\circle*{4}} \put(40,75){\circle*{4}} }

\put(10,10){\put(15,0){\color{blue}\line(1,0){50}} \put(15,0){\color{blue}\line(-1,3){15}} \put(65,0){\color{green}\line(1,3){15}} \put(15,0){\color{green}\line(1,3){25}} \put(65,0){\line(-1,3){25}} \put(15,0){\line(13,9){65}} \put(65,0){\line(-13,9){65}} \put(0,45){\line(1,0){80}} \put(0,45){\color{red}\line(4,3){40}} \put(80,45){\color{red}\line(-4,3){40}}
\put(15,0){\circle*{4}} \put(65,0){\circle*{4}} \put(0,45){\circle*{4}} \put(80,45){\circle*{4}} \put(40,75){\circle*{4}} }

\put(105,10){\put(15,0){\color{blue}\line(1,0){50}} \put(15,0){\color{blue}\line(-1,3){15}} \put(65,0){\line(1,3){15}} \put(15,0){\line(1,3){25}} \put(65,0){\color{green}\line(-1,3){25}} \put(15,0){\color{green}\line(13,9){65}} \put(65,0){\line(-13,9){65}} \put(0,45){\line(1,0){80}} \put(0,45){\color{red}\line(4,3){40}} \put(80,45){\color{red}\line(-4,3){40}}
\put(15,0){\circle*{4}} \put(65,0){\circle*{4}} \put(0,45){\circle*{4}} \put(80,45){\circle*{4}} \put(40,75){\circle*{4}} }

\put(200,10){\put(15,0){\color{blue}\line(1,0){50}} \put(15,0){\color{blue}\line(-1,3){15}} \put(65,0){\color{green}\line(1,3){15}} \put(15,0){\line(1,3){25}} \put(65,0){\line(-1,3){25}} \put(15,0){\color{green}\line(13,9){65}} \put(65,0){\line(-13,9){65}} \put(0,45){\line(1,0){80}} \put(0,45){\color{red}\line(4,3){40}} \put(80,45){\color{red}\line(-4,3){40}}
\put(15,0){\circle*{4}} \put(65,0){\circle*{4}} \put(0,45){\circle*{4}} \put(80,45){\circle*{4}} \put(40,75){\circle*{4}} }

\put(295,10){\put(15,0){\line(1,0){50}} \put(15,0){\color{green}\line(-1,3){15}} \put(65,0){\color{blue}\line(1,3){15}} \put(15,0){\line(1,3){25}} \put(65,0){\color{green}\line(-1,3){25}} \put(15,0){\color{blue}\line(13,9){65}} \put(65,0){\line(-13,9){65}} \put(0,45){\line(1,0){80}} \put(0,45){\color{red}\line(4,3){40}} \put(80,45){\color{red}\line(-4,3){40}}
\put(15,0){\circle*{4}} \put(65,0){\circle*{4}} \put(0,45){\circle*{4}} \put(80,45){\circle*{4}} \put(40,75){\circle*{4}} }

\put(390,10){\put(15,0){\color{green}\line(1,0){50}} \put(15,0){\line(-1,3){15}} \put(65,0){\color{blue}\line(1,3){15}} \put(15,0){\color{green}\line(1,3){25}} \put(65,0){\line(-1,3){25}} \put(15,0){\color{blue}\line(13,9){65}} \put(65,0){\line(-13,9){65}} \put(0,45){\line(1,0){80}} \put(0,45){\color{red}\line(4,3){40}} \put(80,45){\color{red}\line(-4,3){40}}
\put(15,0){\circle*{4}} \put(65,0){\circle*{4}} \put(0,45){\circle*{4}} \put(80,45){\circle*{4}} \put(40,75){\circle*{4}} }

\put(485,10){\put(15,0){\line(1,0){50}} \put(15,0){\color{green}\line(-1,3){15}} \put(65,0){\color{blue}\line(1,3){15}} \put(15,0){\line(1,3){25}} \put(65,0){\line(-1,3){25}} \put(15,0){\color{blue}\line(13,9){65}} \put(65,0){\color{green}\line(-13,9){65}} \put(0,45){\line(1,0){80}} \put(0,45){\color{red}\line(4,3){40}} \put(80,45){\color{red}\line(-4,3){40}}
\put(15,0){\circle*{4}} \put(65,0){\circle*{4}} \put(0,45){\circle*{4}} \put(80,45){\circle*{4}} \put(40,75){\circle*{4}} }

\end{picture}

\small Figure 2. Extremal colorings for $\ar(K_5,K_4)$.
\end{center}

We denote by $K_k-e$ the graph obtained from $K_k$ by removing an arbitrary edge. The \emph{hourglass}, the \emph{house}, and the \emph{prism} are graphs shown in Figure 3.

\begin{center}
\begin{picture}(220,60)

\thicklines
\put(10,10){\put(0,0){\line(0,1){40}} \put(0,0){\line(5,4){50}} \put(0,40){\line(5,-4){50}} \put(50,0){\line(0,1){40}}
\put(0,0){\circle*{4}} \put(0,40){\circle*{4}} \put(25,20){\circle*{4}} \put(50,0){\circle*{4}} \put(50,40){\circle*{4}} }

\put(80,10){\put(0,0){\line(0,1){40}} \put(0,0){\line(1,0){25}} \put(0,40){\line(1,0){25}} \put(25,0){\line(0,1){40}} \put(25,0){\line(5,4){25}} \put(25,40){\line(5,-4){25}}
\put(0,0){\circle*{4}} \put(0,40){\circle*{4}} \put(25,0){\circle*{4}} \put(25,40){\circle*{4}} \put(50,20){\circle*{4}} }

\put(150,10){\put(0,0){\line(0,1){40}} \put(0,0){\line(1,1){20}} \put(0,40){\line(1,-1){20}} \put(0,0){\line(1,0){60}} \put(0,40){\line(1,0){60}} \put(20,20){\line(1,0){20}} \put(40,20){\line(1,-1){20}} \put(40,20){\line(1,1){20}} \put(60,0){\line(0,1){40}}
\put(0,0){\circle*{4}} \put(0,40){\circle*{4}} \put(20,20){\circle*{4}} \put(40,20){\circle*{4}} \put(60,0){\circle*{4}} \put(60,40){\circle*{4}} }
\end{picture}

\small Figure 3. The hourglass, the house and the prism.
\end{center}

We make use of the following result.

\begin{theorem}[Dirac \cite{Di}]\label{ThDi}
  If $k\geq 4$ and $n\geq k+1$, then
  $$\ex(K_n,K_k-e)=\ex(K_n,K_{k-1})=e(T_{n,k-2}),$$
  and the extremal graph for $\ex(K_n,K_k-e)$ is either the Tur\'an graph $T_{n,k-2}$, or the hourglass or house (for $k=4,n=5$), or the prism (for $k=4,n=6$).
\end{theorem}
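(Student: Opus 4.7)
The plan is to prove the equality $\ex(K_n, K_k-e) = e(T_{n,k-2})$ and to identify the extremal graphs by induction on $n$. The lower bound is immediate: in $K_k - e$, deleting either endpoint of the missing edge leaves $K_{k-1}$, so every $K_{k-1}$-free graph is also $(K_k-e)$-free, and Tur\'an's theorem supplies $T_{n,k-2}$ as an extremal construction with $e(T_{n,k-2})$ edges.

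For the matching upper bound, let $G$ be a $(K_k-e)$-free graph on $n \geq k+1$ vertices. If $G$ is itself $K_{k-1}$-free, Tur\'an's theorem gives $e(G) \leq e(T_{n,k-2})$ with equality iff $G = T_{n,k-2}$. Otherwise fix a set $S$ of $k-1$ vertices inducing a $K_{k-1}$ in $G$. The key observation is that no vertex $u \in V(G) \setminus S$ can have $k-2$ or more neighbors in $S$, since then $S \cup \{u\}$ would contain a copy of $K_k - e$. For $n \geq 2k$, the induction hypothesis applies to the $(K_k-e)$-free graph $G-S$ on $n-k+1 \geq k+1$ vertices, giving
$$e(G) \leq \binom{k-1}{2} + (k-3)(n-k+1) + e(T_{n-k+1, k-2}),$$
and a direct part-size comparison shows this right-hand side is at most $e(T_{n,k-2})$. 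For the bounded range $k+1 \leq n \leq 2k-1$ where the induction bottoms out, I would handle each case directly, using that the complement of any candidate extremal graph has few edges and can be enumerated.

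The characterization of extremal graphs comes from propagating equality through the above chain. In the equality case, $G[S] \cong K_{k-1}$, every vertex of $V(G)\setminus S$ has exactly $k-3$ neighbors in $S$, and $G-S$ is itself an extremal $(K_k-e)$-free graph. A structural analysis of the possible attachment patterns — observing that if two external vertices share enough common neighbors in $S$, then together with a suitable subset of $S$ they complete a $K_k-e$ — forces, for $k \geq 5$ or $k = 4$ with $n \geq 7$, the extremal graph to be $K_{k-1}$-free, hence the Tur\'an graph $T_{n,k-2}$ by Tur\'an's uniqueness. The sporadic pairs $(k,n) = (4,5)$ and $(4,6)$ arise precisely where the induction bottoms out, and direct enumeration identifies the hourglass and house on five vertices, and the prism on six vertices, as the additional extremal graphs; one checks by inspection that each is $(K_4-e)$-free and has exactly $e(T_{n,2})$ edges.

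The main obstacle will be the equality analysis: I must propagate equality through both the $(k-3)$-degree bound into $S$ and the inductive bound simultaneously, and rule out hybrid extremal structures that are partly Tur\'an and partly not. The small-case base analysis for $k = 4$ demands an exhaustive check that identifies all three sporadic families without omission and confirms that no analogous sporadic examples exist for $k \geq 5$, since for $k \geq 5$ the extra ``room'' in the Tur\'an graph (more parts available) tightens the combinatorics and eliminates non-Tur\'an candidates.
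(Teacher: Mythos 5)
This statement is Dirac's theorem, which the paper quotes from \cite{Di} without proof, so there is no in-paper argument to compare your route against; I can only assess your sketch on its own terms. Your skeleton is the standard one and its non-deferred parts are sound: the lower bound via the observation that $K_{k-1}\subseteq K_k-e$ is correct; the observation that a vertex with at least $k-2$ neighbours in a $K_{k-1}$ creates a $K_k-e$ is correct; and for $n\geq 2k$ the resulting bound $\binom{k-1}{2}+(k-3)(n-k+1)+e(T_{n-k+1,k-2})$ is indeed at most $e(T_{n,k-2})$ (in fact strictly less, which makes the equality analysis in that range easy: an extremal graph must be $K_{k-1}$-free and Tur\'an uniqueness finishes it).

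The genuine gap is the range $k+1\leq n\leq 2k-1$, which you defer to ``direct enumeration.'' This is not a bounded collection of cases: it is an infinite family, one window of $k-1$ values of $n$ for every $k\geq 4$, and for $n$ near $2k$ the complement of a candidate extremal graph has $\Theta(k)$ edges, so enumeration is not literally available. Worse, this window is where all the actual content of the theorem lives: the sporadic extremal graphs (hourglass, house, prism) occur at $(k,n)=(4,5),(4,6)$, and the claim that no analogous sporadic examples exist for $k\geq 5$ --- which you assert via an appeal to ``extra room'' in the Tur\'an graph --- is exactly the statement that needs proof there, since your inductive inequality fails in that range (e.g.\ for the prism, $G-S$ is a triangle on $3$ vertices with $3>e(T_{3,2})$ edges, so the bound $e(G-S)\leq e(T_{n-k+1,k-2})$ is simply false below the induction threshold). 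To close this you would need a separate argument for $k+1\leq n\leq 2k-1$ and general $k$ --- for instance a sharper counting of edges between $S$ and $V(G)\setminus S$ combined with a structural analysis of $(K_k-e)$-free graphs on few vertices --- rather than case checking. As written, the proposal proves the theorem only for $n\geq 2k$ and postulates the rest.
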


\begin{theorem}
  Let $c$ be an extremal coloring for $\ar(G,H)$, where $G=K_r^t$, $H=K_k$, $r>k\geq 4$, $t\geq 2$. If $c$ is totaly symmetric, then $G^c=\mathcal{B}(K^c,f)$, where $K=K_r$ and $f(v)=t$ for all $v\in V(K)$, such that one of the following holds (up to isomorphism):\\
  (1) $k=4,r=5$, and $c$ is a coloring of $K$ with a rainbow hourglass or house and all other edges having an extra common color;\\
  (2) $k=4,r=6$, and $c$ is a coloring of $K$ with a rainbow prism and all other edges having an extra common color;\\
  (3) $c$ is a coloring of $K$ with a rainbow Tur\'an graph $T_{r,k-2}$ and all other edges having an extra common color.
\end{theorem}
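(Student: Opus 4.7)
The plan is to combine the equality-case information extracted from the proof of Theorem~\ref{ThBalanced} with Dirac's extremal theorem (Theorem~\ref{ThDi}).

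Since $c$ is totally symmetric and extremal, the argument used to establish Theorem~\ref{ThUnbalanced} yields $G^c = \mathcal{B}(K^c, f)$ for some coloring $c$ of $K = K_r$ with no rainbow $K_k$, and $c(G) = \ar(G,H) = t^2\tau + 1$, where $\tau := \ex(K_r, K_{k-1})$. Writing $s_i := |S^i(K)|$, we have $c(G) = s_0 + t s_1 + t^2 s_2$. The case analysis already carried out in the proof of Theorem~\ref{ThBalanced} excludes $s_0 + s_1 \geq 2$ (which gives $c(G) \leq 2t + t^2(\tau - 1) < t^2\tau + 1$) and excludes $(s_0, s_1) = (0, 1)$ (because deleting the unique saturating vertex leaves a rainbow $K_{r-1}$, hence a rainbow $K_k$ since $r > k$). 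Thus $s_0 = 1$, $s_1 = 0$, and consequently $s_2 = \tau$. In other words, $K^c$ has exactly one repeated color $a$, unsaturated by any vertex, and $\tau$ exclusive colors.

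Let $R \subseteq E(K)$ denote the set of exclusive edges, so $|E(R)| = \tau$ and every edge of $E(K) \setminus E(R)$ is colored $a$. The key observation I would then establish is that $R$ contains no copy of $K_k - e$: otherwise, the unique edge of $K$ completing such a $K_k - e$ to a $K_k$ must lie outside $R$ and hence be colored $a$, giving a $K_k$ whose $\binom{k}{2} - 1$ edges in $R$ all carry distinct exclusive colors together with one edge of color $a$, i.e., a rainbow $K_k$ in $K^c$, a contradiction. Hence $R$ is a $(K_k - e)$-free subgraph of $K_r$ with $|E(R)| = \tau = \ex(K_r, K_k - e)$ by Theorem~\ref{ThDi}, so $R$ is extremal for $\ex(K_r, K_k - e)$.

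Finally, I would invoke the extremal-graph characterization in Theorem~\ref{ThDi}: $R$ must be either the Tur\'an graph $T_{r, k-2}$, or (when $k = 4$ and $r = 5$) the hourglass or the house, or (when $k = 4$ and $r = 6$) the prism. In each case, $K^c$ is precisely a rainbow copy of $R$ together with the extra common color $a$ on the remaining edges $E(K) \setminus E(R)$, which matches exactly the three coloring types (1), (2), (3) listed in the statement. The only genuinely novel step beyond what is already implicit in Theorems~\ref{ThBalanced} and~\ref{ThDi} is the $(K_k-e)$-freeness of $R$, and this is immediate from the rainbow-$K_k$ argument sketched above; I do not anticipate any serious obstacle.
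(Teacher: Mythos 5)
Your proposal is correct and follows essentially the same route as the paper's proof: extract $s_0=1$, $s_1=0$, $s_2=\ex(K_r,K_{k-1})$ from the equality case of Theorem~\ref{ThBalanced}, observe that the subgraph spanned by the exclusive edges (your $R$, the paper's $L$) must be $(K_k-e)$-free since otherwise the missing edge carries the common color and yields a rainbow $K_k$, and then apply the extremal-graph characterization in Dirac's theorem. No gaps; the argument matches the paper's.
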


\begin{proof}
  From the analysis in the proof of Theorem \ref{ThUnbalanced}, we see that $G^c$ is the blow-up of a colored graph $K^c$, where $K^c$ contains no rainbow $H$. Set $s^i(K)=|S^i(K)|$, $i=0,1,2$. By Theorem \ref{ThSc}, $c(K)=s_0+s_1+s_2\leq\ex(K_r,K_{k-1})$; and by the definition of the blow-up, $c(G)=s_0+ts_1+t^2s_2=t^2\ex(K_r,K_{k-1})+1$. From the analysis in proof of Theorem \ref{ThBalanced}, we see that $s_0=1$, $s_1=0$ and $s_2=\ex(K_r,K_{k-1})$.

  Let $L$ be the subgraph of $K$ induced by all the edges of colors in $S^2(K)$. Thus $e(L)=s_2=\ex(K_r,K_{k-1})$. If $L$ contains a $K_k-e$, then the unique missing edge $e$ has the color of $S^0$ in $K^c$, and thus $K$ contains a rainbow $H$, a contradiction. This implies that $L$ contains no $K_k-e$. By Theorem \ref{ThDi}, $L$ is either the Tur\'an graph $T_{r,k-2}$, or the hourglass or house (for $k=4,r=5$), or the prism (for $k=4,r=6$). That is, $K^c$ satisfies (1)(2) or (3).
\end{proof}

We remark that there are extremal colorings for $\ar(G,H)$ that are not totaly symmetric.

\medskip\noindent\textbf{Example 1.} Suppose that $k-2\nmid r$. Then the Tur\'an graph $T_{r-1,k-2}$ has at least two partite sets of size $\lfloor\frac{r-1}{k-2}\rfloor$. Let $T=T_{r-1,k-2}$ with $V(T)=\{v_1,\ldots,v_{r-1}\}$, $T_1,T_2$ be two graphs isomorphic to $T_{r,k-2}$ obtained from $T$ by adding a vertex $v_r^1,v_r^2$, respectively, such that $N_{T_1}(v_r^1)\neq N_{T_2}(v_r^2)$. Let $t=t_1+t_2$, $U_i=\{u_i^1,\ldots,u_i^t\}$, $1\leq i\leq r-1$, $U_r^1=\{u_r^1,\ldots,u_r^{t_1}\}$, $U_r^2=\{u_r^{t_1+1},\ldots,u_r^t\}$. Let $c$ be a coloring of $G=K_r^t$ such that all edges in $$\{u_i^su_j^{s'}: v_iv_j\in E(T)\}\cup\{u_i^su_r^{s'}: s'\leq t_1,v_iv_r^1\in E(T_1)\mbox{ or }s'>t_1,v_iv_r^2\in E(T_2)\}$$
have exclusive colors and all other edges have an extra common color. Then $c$ is an extremal coloring for $\ar(G,H)$ which is not totaly symmetric.

\section{Acknowledgements}

The research of the first and third authors was supported by NSFC (12071370) and Shaanxi Fundamental Science Research Project for Mathematics
and Physics (22JSZ009). The research of the second author was supported by NKFIH, grant K132696. This work was done when the third author visited the
Alfr\'{e}d R\'{e}nyi Institute of Mathematics.

\end{document}